


\documentclass[final,1p,times]{elsarticle}


\usepackage{amssymb}
\usepackage{amsthm}
\usepackage{amsmath,amssymb,amsopn,amsfonts,mathrsfs,amsbsy,amscd}
\usepackage{longtable}





\newcommand{\ass}{\mathrm{ass}}

\newcommand{\prs}{\langle\;,\;\rangle}

\newcommand{\too}{\longrightarrow}
\newcommand{\om}{\omega}
\newcommand{\esp}{\quad\mbox{and}\quad}

\def\br{[\;,\;]}

\newcommand{\R}{\mathbb{R}}
\newcommand{\G}{{\mathfrak{g}}}

\newcommand{\h}{{\mathfrak{h}}}

\newcommand{\tr}{{\mathrm{tr}}}
\newcommand{\ric}{{\mathrm{ric}}}
\newcommand{\Li}{\mathrm{L}}
\newcommand{\Lii}{\overline{\mathrm{L}}}
\newcommand{\Ri}{\mathrm{R}}

\newcommand{\D}{{\cal D}}

\newcommand{\na}{\nabla}

\newcommand{\al}{\alpha}
\newcommand{\be}{\beta}
\newcommand{\ga}{\gamma}
\newcommand{\Ga}{\Gamma}
\newcommand{\e}{\epsilon}

\newcommand{\la}{\lambda}

\newtheorem{theo}{Theorem}[section]
\newtheorem{pr}{Proposition}[section]
\newtheorem{Le}{Lemma}[section]
\newtheorem{co}{Corollary}[section]

\newtheorem{exem}{Example}
\newtheorem{remark}{Remark}

\begin{document}

\begin{frontmatter}
	
	
	
	
	\title{  Real Left-Symmetric Algebras with Positive Definite Koszul Form and K\"ahler-Einstein Structures}
	
	\author[label1, label2]{Mohamed Boucetta, Hasna Essoufi }
	\address[label1]{Universit\'e Cadi-Ayyad\\
		Facult\'e des sciences et techniques\\
		BP 549 Marrakech Maroc\\e-mail: m.boucetta@uca.ma
	}
	
	\address[label2]{Universit\'e Cadi-Ayyad\\
		Facult\'e des sciences et techniques\\
		BP 549 Marrakech Maroc\\e-mail: essoufi.hasna@gmail.com
	}
	
	
	
	
	\begin{abstract} 
		
		Let \((\G, \bullet)\) be a real left symmetric algebra, and \((\G^-, \br)\) the corresponding Lie algebra. We denote by \(\Li\) the left multiplication operator associated with the product \(\bullet\). The symmetric bilinear form \(\mathrm{B}(X, Y) = \tr(\Li_{X \bullet Y})\), referred to as the Koszul form of \((\G, \bullet)\), is introduced. We provide a complete characterization, along with a broad class of examples, of real left symmetric algebras that possess a positive definite Koszul form. In particular, we show that for a left symmetric algebra with positive definite Koszul form being commutative or associative or Novikov implies that this algebra is isomorphic to $\R^n$ endowed with its canonical product.
		Beyond their  algebraic interest, we show that any real left symmetric algebra \((\G, \bullet)\) with a positive definite Koszul form induces a K\"ahler-Einstein structure with negative scalar curvature on the tangent bundle \(TG\) of any connected Lie group \(G\) associated to \((\G^-, \br)\). Furthermore, the characterization of left symmetric algebras with a positive definite Koszul form leads to a new class of non-associative algebras, which are of independent interest and generalize Hessian Lie algebras.
		
	\end{abstract}
	
	\begin{keyword} Left symmetric algebras \sep Hessian Lie algebras   \sep Einstein manifolds  \sep 
		\MSC 53C25 \sep \MSC 17B05 \sep \MSC 17B99
		
		
	\end{keyword}
	
\end{frontmatter}







\section{Introduction}\label{section1}

A Hessian manifold (for details, see \cite{shima, shima1}) is a triple $(M,g,\na)$ where $g$ is a Riemannian metric and $\na$ is a flat torsionless connection such that $g$ satisfies the Codazzi equation
\begin{equation}\label{codazzi} \na_X(g)(Y,Z)=\na_Y(g)(X,Z) \end{equation}for any vector fields $X,Y,Z$. Denote by $\D$ the Levi-Civita connection of $(M,g)$. The Koszul 1-form $\al$ and the second Koszul form $\be$ of $(M,g,\na)$ are given by
\begin{equation}\label{koszul} \al(X)=\tr(\ga_X)\esp \be(X,Y)=\na_X(\al)(Y),\quad X,Y\in\Ga(TM), \end{equation}where $\ga_XY=\D_XY-\na_XY$. The 1-form $\al$ is closed which implies that $\be$ is symmetric.
One of the key properties of Hessian manifolds is that their tangent bundle \(TM\) naturally admits a Kähler structure \((\hat{g}, J)\). Moreover, this structure is \(\mu\)-Einstein\footnote{A Riemannian manifold $(M,g)$ is called $\mu$-Einstein if its Ricci tensor $\ric$ satisfies $\ric=\mu g$.}if and only if \(\beta = -\mu g\).

A Hessian Lie group is a Lie group $G$ endowed with a left invariant Hessian structure $(g,\na)$. The couple $(g,\na)$ induces on the Lie algebra $(\G,\br)$ of $G$ identified to the vector space of left invariant vector fields, a scalar product $\prs$ and product $\bullet$ defined by
\[ \langle X,Y\rangle=g(X,Y)\esp X\bullet Y=\na_XY,\quad X,Y\in\G. \]
Since both $g$ and $\na$ are left invariant, the connection $\na$ is flat and torsion-free if and only if
\begin{equation}\label{left}
	X\bullet Y-Y\bullet X=[X,Y]\esp \ass(X,Y,Z)=\ass(Y,X,Z),\quad X,Y,Z\in\G,
\end{equation}where $\ass(X,Y,Z)=(X\bullet Y)\bullet Z-X\bullet(Y\bullet Z)$ is the associator. Additionally, the Codazzi equation \eqref{codazzi} is equivalent to:
\begin{equation}\label{hessian}
	\langle X\bullet Y-Y\bullet X,Z\rangle=\langle Y\bullet Z,X\rangle- \langle X\bullet Z,Y\rangle,\quad X,Y,Z\in\G.
\end{equation}Recall that an algebra $(\G,\bullet)$ is called a left symmetric algebra if its associator satisfies the second condition in \eqref{left}. It is well known that, in this case, $\bullet$ is Lie admissible, i.e, the bracket $[X,Y]=X\bullet Y-Y\bullet X$ is a Lie bracket. We introduce the Koszul form of $(\G,\bullet)$ as the bilinear symmetric form $\mathrm{B}$ given by
\begin{equation}\label{abel}
	\mathrm{B}(X,Y)=\tr(\Li_{X\bullet Y})
\end{equation}where $\Li$ is the left multiplication operator of $\bullet$. The left symmetry of the associator implies that $\mathrm{B}$ satisfies
\begin{equation}\label{6}
	\mathrm{B}(X\bullet Y-Y\bullet X,Z)=\mathrm{B}(Y\bullet Z,X)-\mathrm{B}(X\bullet Z,Y),\quad X,Y,Z\in\G.
\end{equation}

A Hessian  algebra is a left symmetric algebra $(\G,\bullet)$ endowed with a scalar product $\prs$ satisfying \eqref{hessian}. Left symmetric algebras and Hessian algebras play a significant role in geometry, physics, and algebra (see   \cite{bai, burde, koszul,Vinberg}).

Let \((G, g, \nabla)\) be a Hessian Lie group, and \((\G, \bullet, \langle \cdot, \cdot \rangle)\) its associated Hessian  algebra. The key observation here is that the second Koszul form of \((G, g, \nabla)\) is independent of the metric \(g\) and, when restricted to \(\G\), it coincides with the Koszul form \(B\) of \((\G, \bullet)\). Since we have established that the Kähler structure on \(TG\) is \(\mu\)-Einstein if and only if the second Koszul form \(\beta\) satisfies \(\beta = -\mu g\), we can now state our first main result:

\begin{theo}\label{main}
	\begin{enumerate}
		\item Let $(G,g,\na)$ be a Hessian Lie group such that the Koszul form of its associated Hessian algebra vanishes.Then the K\"ahler structure of $TG$ is Ricci-flat.

		\item 
		Let \((\G, \bullet)\) be a real left symmetric algebra with a positive definite Koszul form \(B\), and let \(G\) be a connected Lie group whose Lie algebra corresponds to the underlying Lie algebra of \((\G, \bullet)\). Then $(\G,\bullet,B)$ is a Hessian algebra, $G$ is solvable and, for any \(\alpha > 0\), \((\alpha B, \bullet)\) defines a left-invariant Hessian structure \((g, \nabla)\) on \(G\), and the associated Kähler structure on \(TG\) is \(-\frac{1}{\alpha}\)-Einstein.
	\end{enumerate}
\end{theo}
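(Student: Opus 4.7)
For part (1), the argument is essentially bookkeeping. Since both $g$ and $\na$ are left-invariant, the second Koszul form $\be$ defined by \eqref{koszul} is left-invariant and hence determined by its restriction to $\G$. The key observation recalled just before the theorem states $\be|_\G=\mathrm{B}$; the vanishing hypothesis therefore gives $\be\equiv 0$ on $G$. Applying the stated equivalence ``K\"ahler structure on $TM$ is $\mu$-Einstein $\Longleftrightarrow\be=-\mu g$'' with $\mu=0$ yields Ricci-flatness.

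For part (2), the plan is to verify in turn that (i) $(\G,\bullet,\mathrm{B})$ is a Hessian algebra, (ii) each scaling $\alpha\mathrm{B}$ lifts to a left-invariant Hessian structure on $G$, (iii) the K\"ahler-Einstein constant is $-1/\alpha$, and (iv) $G$ is solvable. For (i): the Hessian identity \eqref{hessian} applied to $\prs=\mathrm{B}$ is precisely the identity \eqref{6} already established from left-symmetry; symmetry of $\mathrm{B}$ comes from the standard fact $\Li_{[X,Y]}=[\Li_X,\Li_Y]$ (a direct consequence of $\ass(X,Y,Z)=\ass(Y,X,Z)$), which gives $\mathrm{B}(X,Y)-\mathrm{B}(Y,X)=\tr(\Li_{[X,Y]})=\tr([\Li_X,\Li_Y])=0$; and positive-definiteness is the hypothesis. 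For (ii), scaling by $\alpha>0$ preserves all three properties, and the corresponding left-invariant extension to $G$ (paired with the left-invariant connection $\na$ determined by $\na_XY=X\bullet Y$) gives the required Hessian structure. For (iii), invoke again that $\be$ is \emph{independent} of $g$ and restricts to $\mathrm{B}$ on $\G$, so $\be=\mathrm{B}=\frac{1}{\alpha}(\alpha\mathrm{B})=\frac{1}{\alpha}g$ on all of $G$ by left-invariance; the equivalence with $\mu=-\frac{1}{\alpha}$ concludes.

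The main obstacle is step (iv), solvability of $G$. The plan here is to appeal to the classical theorem of Shima (see \cite{shima,shima1}) asserting that any connected Lie group carrying a left-invariant Hessian structure is solvable; this applies directly since (i)-(ii) have just equipped $G$ with such a structure. Should an internal proof be preferable, the alternative route is to show, using the Hessian identity \eqref{6} with $\mathrm{B}$ positive definite, that the right multiplications $\Ri_X$ lie in the orthogonal of an $\ad$-invariant decomposition forcing $[\G,\G]$ to be nilpotent--equivalently, to run the developing-map argument producing a convex domain without affine lines on which $G$ acts simply transitively, whose automorphism groups are known to be solvable. Either route reduces the claim to standard results, so no new calculation is required beyond the algebraic identity \eqref{6} and the metric-independence of $\be$.
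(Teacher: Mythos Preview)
Your proposal is correct and follows essentially the same route as the paper: both arguments rest on Proposition~\ref{beta} (the second Koszul form $\be$ is metric-independent and restricts to $\mathrm{B}$ on $\G$), Proposition~\ref{einstein} (the $\mu$-Einstein condition is $\be=-\mu g$), and Shima's theorem \cite{shima1} for solvability. Your write-up is slightly more detailed---you spell out the symmetry of $\mathrm{B}$ via $\Li_{[X,Y]}=[\Li_X,\Li_Y]$ and make the scaling step explicit---but the underlying proof is the same.
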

It is important to mention that there is Lie group structure on $TG$ such that the associated K\"ahler structure is left invariant (see \cite{bou}). As a consequence, the Koszul form $B$ can never be negative definite, otherwise $TG$ will have an Einstein metric with positive scalar curvature which is not possible since $TG$ is not compact.

Theorem \ref{main} underscores the significance of studying left symmetric algebras with a positive definite Koszul form for short $\mathrm{LSPK}$, beyond their inherent interest as a subclass of Hessian algebras. Consequently, the second part of this paper is dedicated to a complete examination of this class of non-associative algebras. We first show by using the famous Artin-Wedderburn theorem  that for a $\mathrm{LSPK}$ being associative or commutative or Novikov implies that the algebra is isomorphic to $\R^n$ with its canonical associative commutative product (see Theorem \ref{ass}). The general case is treated in
 Theorem \ref{theo} where a complete description of this class of algebras is given. This theorem shows that these algebras can be constructed from  a class of algebras that generalize Hessian algebras. To our knowledge, this broader class has not been previously considered. Let us introduce this class.

For $k\in\R$, a $k$-Hessian  algebra is an algebra $(\G,\bullet)$ endowed with a scalar product satisfying \eqref{hessian} and, for any $X,Y,Z\in\G$,
\begin{equation}\label{k}
	\ass(X,Y,Z)-\ass(Y,X,Z)=k\left(\langle X,Z\rangle Y-\langle Y,Z\rangle X\right).
\end{equation}
Clearly a $0$-Hessian algebra is a Hessian algebra. Note that the relation \eqref{k} implies that the bracket $[X,Y]=X\bullet Y-Y\bullet X$ defines a Lie bracket. Moreover, if $G$ is a connected Lie group whose Lie algebra is $(\G,\br)$ then $(\bullet,\prs)$ induces on $G$ a left invariant Riemannian metric $g$ and a left invariant torsion free connection $\na$ such that $g$ satisfies the Codazzi equation \eqref{codazzi} and, for any $X,Y\in\Ga(TG)$,
\begin{equation}\label{khessian}
	R^\na(X,Y):=\na_{[X,Y]}-\na_X\na_Y+\na_Y\na_X=kX\wedge Y,
\end{equation}where $(X\wedge Y)(Z)=\langle X,Z\rangle Y-\langle Y,Z\rangle X$. This leads to an important generalization of Hessian manifolds, referred to as $k$-Hessian manifolds.  A $k$-Hessian manifold is a Riemannian manifold $(M,g)$ endowed with a torsion-free connection $\na$ satisfying \eqref{codazzi} and \eqref{khessian}. Note that a Riemannian manifold with constant sectional curvature is a $k$-Hessian manifold if   $\na$ is the Levi-Civita connection.

In conclusion, the study of $\mathrm{LSPK}$ provides a valuable framework for constructing Einstein-Kähler manifolds. Additionally, this exploration has led to the discovery of two new structures: \(k\)-Hessian algebras and \(k\)-Hessian manifolds. These structures are of independent interest and merit further investigation on their own. We devote Section \ref{section4} to a preliminary study of $k$-Hessian algebras where we prove an important result (see Theorem \ref{milnor}) and we give many examples.

The paper is organized as follows. In Section \ref{section2}, to ensure the paper is self-contained, we calculate the Ricci curvature of the Kähler structure on the tangent bundle of a Hessian manifold using a method distinct from that in \cite{shima}, and provide a proof of Theorem \ref{main}. Section \ref{section3} is dedicated to the algebraic study of $\mathrm{LSPK}$, culminating in Theorem \ref{theo} and its corollary. In Section \ref{section4}, we introduce $k$-Hessian algebras, present several classes of examples, and characterize a subclass of these algebras (see Theorem \ref{milnor}). Section \ref{section6} addresses the classification of $\mathrm{LSPK}$ in dimensions 2 and 3, along with two examples in dimensions 4 and 5. Finally, Section \ref{section7} is an appendix that provides detailed computations required for the proof of Proposition \ref{mainpr}.

\section{Ricci curvature of the K\"ahler structure on the tangent bundle of a Hessian manifold and a proof of Theorem \ref{main}}\label{section2}

In this section, we describe the K\"ahler structure on the tangent bundle of a Hessian manifold and we compute its Ricci curvature.

Let $(M,g,\na)$ be a Hessian manifold of dimension $n$.  We denote by $\D$ the Levi-Civita connection of $g$, $K(X,Y)=\D_{[X,Y]}-\D_X\D_Y+\D_Y\D_X$ its curvature, $\ric$ its Ricci curvature and  $\ga$ the  difference tensor  given by
\begin{equation*}\label{ga1} \ga_XY=\D_XY-\na_XY,\quad X,Y\in\Ga(TM). \end{equation*} Since both $\na$ and $D$ are torsionless, $\ga$ is symmetric and it is easy to check that, for any $X,Y,Z\in\Ga(TM)$,
\begin{equation*}\label{nam}
	\na_{X}(g)(Y,Z)=g(\ga_XY+\ga^*_XY,Z).
\end{equation*}and hence the Codazzi equation \eqref{codazzi} is equivalent to $\ga=\ga^*$, i.e,
\[ g( \ga_XY,Z)=g( Y,\ga_XZ),\quad X,Y,Z\in\Ga(TM). \]
Let $\al$ and $\be$ the Koszul forms given by \eqref{koszul}. The 1-form $\al$ is closed and hence $\be$ symmetric. Denote by $H$ the vector field given by $g(H,X)=\al(X)$.

\begin{pr}\label{curvature} Let $(M,g,\na)$ be a Hessian manifold. Then, for any $X,Y,Z\in\Ga(TM)$,
	\begin{equation*}\begin{cases}\label{curv} 
			\D_X(\ga)(Y,Z)=\D_X(\ga)(Z,Y)=\D_Y(\ga)(X,Z),\\
			K(X,Y)=[\ga_X,\ga_Y],
			\ric(X,Y)=\tr(\ga_X\circ\ga_Y)-\tr(\ga_{\ga_XY}). \end{cases}\end{equation*}
	Moreover, for any orthonormal local frame $(E_1,\ldots,E_n)$,
	\begin{equation*}\label{H} H=\sum_{i=1}^n\ga_{E_i}E_i\esp \D_XH=\sum_{i=1}^n\D_X(\ga)(E_i,E_i). \end{equation*}
	
\end{pr}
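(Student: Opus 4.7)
The plan is to extract all three identities from a single source: the flatness relation $R^\nabla\equiv0$ combined with the substitution $\nabla=\D-\ga$. First I would expand
\[
R^\nabla(X,Y)Z=\nabla_X\nabla_YZ-\nabla_Y\nabla_XZ-\nabla_{[X,Y]}Z
\]
in terms of $\D$ and $\ga$. The pure-$\D$ terms recombine into $-K(X,Y)Z$; the terms quadratic in $\ga$ assemble into $[\ga_X,\ga_Y]Z$; and the mixed terms, after writing $\D_X(\ga_YZ)=(\D_X\ga)(Y,Z)+\ga_{\D_XY}Z+\ga_Y\D_XZ$ and using torsion-freeness of $\D$ to absorb $\ga_{[X,Y]}$, collapse to $(\D_Y\ga)(X,Z)-(\D_X\ga)(Y,Z)$. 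Flatness thus reads
\[
K(X,Y)Z-[\ga_X,\ga_Y]Z=(\D_Y\ga)(X,Z)-(\D_X\ga)(Y,Z).
\]

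The heart of the argument is to split this single identity into two. The Codazzi hypothesis is equivalent to each $\ga_X$ being $g$-symmetric, and combined with the built-in symmetry of $\ga$ as a $(1,2)$-tensor this forces $C(X,Y,Z):=g(\ga_XY,Z)$ to be totally symmetric. A quick calculation using metricity of $\D$ then gives $g((\D_X\ga)(Y,Z),W)=(\D_XC)(Y,Z,W)$, which is symmetric in $Z,W$. On the other hand, both $K(X,Y)$ and $[\ga_X,\ga_Y]$ are $g$-skew endomorphisms, so $g(K(X,Y)Z-[\ga_X,\ga_Y]Z,W)$ is antisymmetric in $Z,W$. A symmetric object equal to an antisymmetric object must vanish, so both sides are zero separately: this yields at once $K(X,Y)=[\ga_X,\ga_Y]$ and the Codazzi-type identity $(\D_X\ga)(Y,Z)=(\D_Y\ga)(X,Z)$. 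The remaining symmetry $(\D_X\ga)(Y,Z)=(\D_X\ga)(Z,Y)$ is immediate from $\ga_YZ=\ga_ZY$ and the Leibniz rule defining $\D\ga$.

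The Ricci formula then follows by plugging $K=[\ga_\cdot,\ga_\cdot]$ into the trace defining $\ric$: expanding the commutator, one term becomes $\sum_ig(\ga_XE_i,\ga_YE_i)=\tr(\ga_X\ga_Y)$ by $g$-symmetry of $\ga$, and the other becomes $g(\ga_XY,H)=\al(\ga_XY)=\tr(\ga_{\ga_XY})$. The identity for $H$ itself is a direct reading $g(H,X)=\al(X)=\sum_ig(\ga_XE_i,E_i)=\sum_ig(X,\ga_{E_i}E_i)$, again via $g$-symmetry. Finally, differentiating $H=\sum_i\ga_{E_i}E_i$ gives $\D_XH=\sum_i(\D_X\ga)(E_i,E_i)+2\sum_i\ga_{\D_XE_i}E_i$, and the second sum vanishes because the coefficients $g(\D_XE_i,E_j)$ are antisymmetric in $i,j$ (orthonormality) while $\ga$ is symmetric in its two arguments.

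The only genuinely clever step is the splitting in the second paragraph. Without noticing that the flatness identity separates cleanly into a $g$-symmetric and a $g$-skew component in $Z,W$, one would be forced to derive the Codazzi-type equation for $\ga$ by an independent and considerably more tedious coordinate or index argument. Once that dichotomy is in hand, everything else is essentially bookkeeping with the $g$-symmetry of $\ga$.
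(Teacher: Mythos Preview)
Your proof is correct and follows essentially the same line as the paper's: both derive the identity $K(X,Y)Z-[\ga_X,\ga_Y]Z=(\D_Y\ga)(X,Z)-(\D_X\ga)(Y,Z)$ from flatness of $\na$ and then split it by observing that the left side is $g$-skew in the last two slots while the right side is $g$-symmetric. The only minor variation is in the final step for $\D_XH$: the paper eliminates the extra $\ga_{\D_XE_i}E_i$ terms by passing to a normal frame at a point, whereas your antisymmetry argument disposes of them directly for an arbitrary orthonormal frame.
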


\begin{proof} Since $\ga_XY=\ga_YX$ we have obviously  $\D_X(\ga)(Y,Z)=\D_X(\ga)(Z,Y)$. On the other hand, since $\ga=\ga^*$, we have, for any $T\in\Ga(TM)$,
	\begin{equation}\label{gamma}g( \D_X(\ga)(Y,Z),T)=g( \D_X(\ga)(Y,T),Z).
	\end{equation}
	Now, we have
	\begin{align*}
		K(X,Y)Z&=\D_{[X,Y]} Z-\D_X\D_YZ+\D_Y\D_XZ\\
		&=\na_{[X,Y]} Z+\ga_{[X,Y]} Z-\D_X\na_YZ-\D_X\ga_YZ+\D_Y\na_XZ
		+\D_Y\ga_XZ\\
		&=\na_{[X,Y]} Z+\ga_{\D_XY}Z-\ga_{\D_YX}Z
		-\na_X\na_YZ-\ga_X\na_YZ-\D_X\ga_YZ+\na_Y\na_XZ+\ga_Y\na_XZ+\D_Y\ga_XZ\\
		&=\ga_{\D_XY}Z-\ga_{\D_YX}Z-\ga_X\D_YZ+\ga_X\ga_YZ-\D_X\ga_YZ+\ga_Y\D_XZ-+\ga_Y\ga_XZ+\D_Y\ga_XZ\\
		&=\D_Y(\ga)(X,Z)-\D_X(\ga)(Y,Z)+[\ga_X,\ga_Y]Z.
	\end{align*} 
	Now $K(X,Y)$ and $[\ga_X,\ga_Y]$ are skew-symmetric with respect to $g$ and, by virtue of \eqref{gamma}, the tensor field $Z\mapsto \D_Y(\ga)(X,Z)-\D_X(\ga)(Y,Z)$ is symmetric so we must have
	\[ \D_Y(\ga)(X,Z)=\D_X(\ga)(Y,Z)\esp K(X,Y)=[\ga_X,\ga_Y]. \]The expression of the Ricci curvature follows immediately from the expression of $K$. 
	
	On the other hand, Fix a point $p\in M$. It is known that there exists   a local orthonormal frame $(E_1,\ldots,E_n)$ in a neighborhood of $p$ such that $(\D E_j)(p)=0$ for $j=1,\ldots,n$.  We have, for any $X\in\Ga(TM)$
	\begin{align*} &g(H,X)=\tr(\ga_X)=\sum_{i=1}^ng(\ga_XE_i,E_i)=
		\sum_{i=1}^ng(\ga_{E_i}E_i,X), \\
		&\sum_{i=1}^n\D_X(\ga)(E_i,E_i)
		=\D_X(H)-2\ga_{\D_XE_i}E_i.
	\end{align*} By evaluating at $p$ we get that the desired result.
\end{proof}
Let us describe now the K\"ahler structure on $TM$ associated to $(M,g,\na)$. Denote by $\pi:TM\too M$ the canonical projection. It is well-known that the connection $\na$ gives rise to a splitting 
\[ TTM=\ker T\pi\oplus \mathcal{H}, \]where
\[ \mathcal{H}_u=\{X^h(u),X\in T_{\pi(u)}M \}\esp X^h(u)=\frac{d}{dt}_{|t=0}\tau^t(u) \]where $\tau^t:T_{\pi(u)}M\too T_{\exp(tu)}M$ is the parallel transport associated to $\na$ along the $\na$-geodesic $t\mapsto\exp(tu)$.
For any $X\in\Ga(TM)$, we denote by $X^h$ its horizontal lift and by $X^v$ its vertical lift.   For any $X,Y\in\Ga(TM)$,
\begin{equation}\label{br} [X^h,Y^h]=[X,Y]^h,\;[X^h,Y^v]=(\na_XY)^v \esp[X^v,Y^v]=0. \end{equation} 
We define on $TM$ a Riemannian metric $\hat{g}$ and a complex structure $J$ by putting
\[ \begin{cases}
	\hat{g}(X^h,Y^h)=g(X,Y)\circ\pi,\; \hat{g}(X^v,Y^v)=g(X,Y)\circ\pi\esp \hat{g}(X^h,Y^v)=0,\\
	JX^h=X^v,\; JX^v=-X^h,\quad X,Y\in\Ga(TM).\end{cases}\] 
Then $(TM,\hat{g},J)$ is a K\"ahler manifold (see \cite{shima}).

By using \eqref{br} and the Koszul formula of the Levi-Civita connection, one can check easily that the Levi-Civita connection $\na^{LC}$ of $\hat{g}$  is given by
\[ \na^{LC}_{X^h}Y^h=(D_XY)^h,\; \na^{LC}_{X^v}Y^v=-(\ga_XY)^h,\;\na^{LC}_{X^v}Y^h=
(\ga_XY)^v \esp \na^{LC}_{X^h}Y^v=(D_XY)^v,\quad X,Y\in\Ga(TM).\] 
Let us compute the curvature of $(TM,\hat{g})$. 
\begin{pr} We have, for any $X,Y,Z\in\Ga(TM)$,
	\[ \begin{cases} R(X^h,Y^h)Z^h=(K(X,Y)Z)^h,\;
		R(X^h,Y^h)Z^v=(K(X,Y)Z)^v,\;
		R(X^v,Y^v)Z^h=([\ga_X,\ga_Y]Z)^h,\;\\
		R(X^v,Y^v)Z^v=([\ga_X,\ga_Y]Z)^v,\;
		R(X^h,Y^v)Z^h=-(\D_X(\ga)(Z,Y))^v
		-(\ga_Z\ga_XY)^v,\;
		R(X^h,Y^v)Z^v=
		(\D_X(\ga)(Y,Z))^h
		+\left( 
		\ga_{Z}\ga_XY \right)^h.
	\end{cases} \]
	
\end{pr}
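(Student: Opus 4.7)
The plan is to compute each of the six curvature components directly from the definition
\[
R(A,B)C \;=\; \na^{LC}_{[A,B]}C \;-\; \na^{LC}_{A}\na^{LC}_{B}C \;+\; \na^{LC}_{B}\na^{LC}_{A}C,
\]
using the three bracket identities in \eqref{br} together with the four connection formulas
$\na^{LC}_{X^h}Y^h=(D_XY)^h$, $\na^{LC}_{X^h}Y^v=(D_XY)^v$, $\na^{LC}_{X^v}Y^h=(\ga_XY)^v$, $\na^{LC}_{X^v}Y^v=-(\ga_XY)^h$ stated just above. The identities match the target formulas, so the work reduces to a bookkeeping exercise in six cases.

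First I would handle the two easy families. For $A=X^h$, $B=Y^h$ the bracket stays horizontal and the horizontal connection acts as $D$ on the appropriate lift, so both $R(X^h,Y^h)Z^h$ and $R(X^h,Y^h)Z^v$ collapse to the lift of $D_{[X,Y]}Z - D_XD_YZ + D_YD_XZ = K(X,Y)Z$. For $A=X^v$, $B=Y^v$ the bracket vanishes, and iterating the two vertical-connection rules produces $\ga_X\ga_YZ - \ga_Y\ga_XZ$ in the appropriate horizontal or vertical slot; both yield $[\ga_X,\ga_Y]$ acting on $Z$.

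The mixed case $A=X^h$, $B=Y^v$ will be the main obstacle, since $[X^h,Y^v]=(\na_XY)^v$ brings in the flat connection $\na$ while $\na^{LC}_{X^h}$ acting on $(\ga_YZ)^v$ or $(\ga_YZ)^h$ produces $D_X(\ga_YZ)=D_X(\ga)(Y,Z)+\ga_{D_XY}Z+\ga_YD_XZ$ by the Leibniz rule. After assembling the three contributions the $\ga_YD_XZ$ term cancels between the second and third summands, and the remaining piece is
\[
\ga_{\na_XY-D_XY}Z \;-\; D_X(\ga)(Y,Z) \;=\; -\ga_{\ga_XY}Z \;-\; D_X(\ga)(Y,Z),
\]
since by definition $\ga_XY = D_XY - \na_XY$. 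Converting $\ga_{\ga_XY}Z$ into $\ga_Z\ga_XY$ uses the symmetry $\ga_AB=\ga_BA$ from Proposition \ref{curvature}, and rewriting $D_X(\ga)(Y,Z)$ as $D_X(\ga)(Z,Y)$ uses the same symmetry. The $R(X^h,Y^v)Z^v$ case is the sign-flipped analogue: the two $-\ga$ contributions now land in the horizontal slot, the vertical part again cancels, and one reads off $D_X(\ga)(Y,Z)+\ga_Z\ga_XY$ as claimed.

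No deeper identities are needed beyond \eqref{br}, the four Levi-Civita formulas, and the symmetry properties of $\ga$ already established; the only delicate point is keeping track of signs and of the horizontal-versus-vertical placement, which is why the $(X^h,Y^v)$ cases carry the bulk of the computation.
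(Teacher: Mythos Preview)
Your proposal is correct and follows essentially the same route as the paper: a direct case-by-case computation from the curvature definition, using the bracket identities \eqref{br} and the four Levi-Civita formulas, with the symmetry $\ga_AB=\ga_BA$ and the relation $\na_XY=D_XY-\ga_XY$ to tidy up the mixed $(X^h,Y^v)$ cases. The only cosmetic difference is that you expand $D_X(\ga_YZ)$ via the Leibniz rule first and then cancel, whereas the paper substitutes $\na_XY=D_XY-\ga_XY$ first and recognizes $D_X(\ga)(Y,Z)$ at the end; the computations are identical.
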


\begin{proof} We have
	\begin{align*}
		R(X^h,Y^h)Z^h&=(K(X,Y)Z)^h,\\
		R(X^h,Y^h)Z^v&=\na^{LC}_{[X,Y]^h}Z^v
		-\na^{LC}_{X^h}\na^{LC}_{Y^h}Z^v
		+\na^{LC}_{Y^h}\na^{LC}_{X^h}Z^v\\
		&=(\D_{[X,Y]}Z)^v-\na^{LC}_{X^h}(\D_{Y}Z)^v+\na^{LC}_{Y^h}(\D_{X}Z)^v\\
		&=(K(X,Y)Z)^v,\\
		R(X^v,Y^v)Z^h&=-\na^{LC}_{X^v}\na^{LC}_{Y^v}Z^h+\na^{LC}_{Y^v}\na^{LC}_{X^v}Z^h\\
		&=-\na^{LC}_{X^v}(\ga_YZ)^v
		+\na^{LC}_{Y^v}(\ga_XZ)^v\\
		&=(\ga_X\ga_YZ)^h-(\ga_Y\ga_XZ)^h,\\
		&=([\ga_X,\ga_Y]Z)^h\\
		R(X^v,Y^v)Z^v&=
		-\na^{LC}_{X^v}\na^{LC}_{Y^v}Z^v+\na^{LC}_{Y^v}\na^{LC}_{X^v}Z^v\\
		&=([\ga_X,\ga_Y]Z)^v,\\
		R(X^h,Y^v)Z^h&=\na^{LC}_{(\na_XY)^v}Z^h
		-\na^{LC}_{X^h}(\ga_ZY)^v
		+\na^{LC}_{Y^v}(\D_XZ)^h\\
		&=(\ga_Z\na_XY)^v-(\D_X\ga_ZY)^V+(\ga_Y\D_XZ)^v\\
		&=(\ga_Z\D_XY)^v-(\D_X\ga_ZY)^v+(\ga_Y\D_XZ)^v-\ga_Z\ga_XY\\
		&=-(\D_X(\ga)(Y,Z))^v-(\ga_Z\ga_XY)^v\end{align*}
	\begin{align*}
		R(X^h,Y^v)Z^v&=
		\na^{LC}_{(\na_XY)^v}Z^v+\na^{LC}_{X^h}(
		\ga_YZ)^h
		+\na^{LC}_{Y^v}(\D_XZ)^v\\
		&=-(\ga_Z\D_XY)^h+(\ga_Z\ga_XY)^h+(\D_X\ga_YZ)^h-(\ga_Y\D_XZ)^h\\
		&=(\D_X(\ga)(Y,Z))^h+(\ga_Z\ga_XY)^h.\quad\quad\quad\quad\quad\quad\quad\quad\quad\quad\quad\qedhere
\end{align*}\end{proof}
The Ricci curvature of $(TM,\hat{g})$ is related to the second Koszul form $\be$.
\begin{pr}\label{einstein}We have, for any $X,Y\in\Ga(TM)$,
	\[ \ric^{\hat{g}}(X^h,Y^h)=
	\ric^{\hat{g}}(X^v,Y^v)=-\be(X,Y)\circ\pi\esp 
	\ric^{\hat{g}}(X^h,Y^v)=0.
	\]In particular, $(TM,\hat{g})$ is $\mu$-Einstein if and only if $\be=-\mu g$.
\end{pr}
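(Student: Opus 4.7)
My plan is to compute each component of $\ric^{\hat g}$ by taking a trace of the curvature formulas from the preceding proposition against an orthonormal frame of $TTM$. Fix $p\in M$ and choose a local $g$-orthonormal frame $(E_1,\ldots,E_n)$ that is $\D$-parallel at $p$; the horizontal and vertical lifts $(E_i^h,E_i^v)_{i=1}^n$ then form an orthonormal frame for $\hat g$ along the fiber over $p$, and this is the frame I will use to evaluate the Ricci traces at any $u\in T_pM$.

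The backbone of the argument is the auxiliary identity
\begin{equation*}
\sum_{i=1}^n g(\D_X(\ga)(Y,E_i),E_i)=\be(X,Y)-\tr(\ga_{\ga_XY}).
\end{equation*}
I would prove it by expanding $\D_X(\ga)(Y,E_i)=\D_X(\ga_YE_i)-\ga_{\D_XY}E_i-\ga_Y\D_XE_i$, pairing with $E_i$ and summing at $p$ to obtain $X\al(Y)-\tr(\ga_{\D_XY})$, then combining this with the definition $\be(X,Y)=X\al(Y)-\tr(\ga_{\na_XY})$ and the relation $\na_XY=\D_XY-\ga_XY$ to substitute $\tr(\ga_{\D_XY})=\tr(\ga_{\na_XY})+\tr(\ga_{\ga_XY})$.

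With this identity in hand, $\ric^{\hat g}(X^h,Y^h)$ splits into a horizontal and a vertical trace. The horizontal part $\sum_i\hat g(R(X^h,E_i^h)Y^h,E_i^h)$ reproduces $\ric^g(X,Y)=\tr(\ga_X\ga_Y)-\tr(\ga_{\ga_XY})$ from Proposition \ref{curvature}. The vertical part $\sum_i\hat g(R(X^h,E_i^v)Y^h,E_i^v)$, using the explicit formula for $R(X^h,Y^v)Z^h$ together with the self-adjointness computation $\sum_i g(\ga_Y\ga_XE_i,E_i)=\tr(\ga_X\ga_Y)$ and the auxiliary identity above, equals $-\be(X,Y)+\tr(\ga_{\ga_XY})-\tr(\ga_X\ga_Y)$. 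The two $\tr(\ga_X\ga_Y)$ contributions and the two $\tr(\ga_{\ga_XY})$ contributions cancel pairwise, leaving exactly $-\be(X,Y)$. The case $\ric^{\hat g}(X^v,Y^v)$ is handled by the same scheme applied to the formulas for $R(X^v,Y^h)Z^v$ and $R(X^v,Y^v)Z^v$, and the identical cancellation yields $-\be(X,Y)$. For the mixed component $\ric^{\hat g}(X^h,Y^v)$ I would observe that each term $R(X^h,E_i^h)Y^v$ and $R(X^h,E_i^v)Y^v$ is either purely horizontal or purely vertical, so after pairing with $E_i^h$ and $E_i^v$ respectively every contribution vanishes by $\hat g$-orthogonality of horizontal and vertical lifts.

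The $\mu$-Einstein characterization is then immediate: since $\hat g(X^h,Y^h)=\hat g(X^v,Y^v)=g(X,Y)\circ\pi$ and $\hat g(X^h,Y^v)=0$, the equation $\ric^{\hat g}=\mu\hat g$ reduces on the basis $(E_i^h,E_j^v)$ to $-\be(X,Y)=\mu g(X,Y)$, that is, $\be=-\mu g$. The main subtlety lies in the auxiliary identity: its extra $-\tr(\ga_{\ga_XY})$ term is precisely what cancels the otherwise stubborn $\tr(\ga_{\ga_XY})$ and $\tr(\ga_X\ga_Y)$ contributions coming from $\ric^g$ and from the vertical trace, producing the clean formula $-\be$.
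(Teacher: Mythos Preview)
Your proof is correct and follows essentially the same route as the paper: trace the curvature formulas against the lifted orthonormal frame, use the full symmetry of $\D(\ga)$ to reduce $\sum_i g(\D_X(\ga)(Y,E_i),E_i)$, and watch the $\tr(\ga_X\ga_Y)$ and $\tr(\ga_{\ga_XY})$ contributions cancel to leave $-\be$. The only difference is cosmetic: the paper packages your auxiliary identity via the vector field $H$ (using $\sum_i\D_X(\ga)(E_i,E_i)=\D_XH$) and invokes the K\"ahler relation $\ric^{\hat g}(JU,JV)=\ric^{\hat g}(U,V)$ to get $\ric^{\hat g}(X^v,Y^v)=\ric^{\hat g}(X^h,Y^h)$ for free, sparing the second trace computation you outline.
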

\begin{proof} Note first that since $(TM,\hat{g},J)$ is a K\"ahler manifold then its Ricci curvature satisfies $\ric^{\hat{g}}(JU,JV)=\ric^{\hat{g}}(U,V)$ for any vector fields $U,V$ on $TM$. This implies that 
	$\ric^{\hat{g}}(X^v,Y^v)=\ric^{\hat{g}}(X^h,Y^h)$. 	Let $(E_1,\ldots,E_n)$ be a local orthonormal frame of $M$. We have
	\begin{align*}
		\ric^{\hat{g}}(X^h,Y^h)&=\sum_{i=1}^n\left(
		g(R(X^h,E_i^h))Y^h,E_i^h)+
		g(R(X^h,E_i^v))Y^h,E_i^v)\right)\\
		&=\ric(X,Y)\circ\pi-\sum_{i=1}^n\left(\langle\D_X(\ga)(Y,E_i),E_i\rangle\circ\pi+
		\langle \ga_Y\ga_X E_i,E_i\rangle\circ\pi
		\right)\\
		&=	\tr(\ga_X\circ \ga_Y)\circ\pi-\tr(\ga_{\ga_XY})\circ\pi
		-\sum_{i=1}^n\langle\D_X(\ga)(E_i,E_i),Y\rangle\circ\pi-\tr(\ga_X\circ\ga_Y)\circ\pi\\
		&\stackrel{\eqref{H}}=-\langle \D_XH,Y\rangle\circ\pi-\tr(\ga_{\ga_XY})\circ\pi\\
		&=-X.\langle H,Y\rangle +\langle H,\D_XY\rangle-\langle \ga_XY,H\rangle\\
		&=-X.\al(Y)+\al(\na_XY)=-\na_X(\al)(Y),\\
		\ric^{\hat{g}}(X^h,Y^v)&=\sum_{i=1}^n\left(
		g(R(X^h,E_i^h))Y^v,E_i^h)+
		g(R(X^h,E_i^v))Y^v,E_i^v)\right)=0.\quad\quad\quad\quad\quad\quad\quad\quad\qedhere
	\end{align*}
\end{proof}

Let \((G, g, \nabla)\) be a Hessian Lie group and  \((\G, \prs, \bullet)\)  its associated Hessian algebra. The Levi-Civita connection of \(g\) induces a product \(\star\) on \(\G\), referred to as the Levi-Civita product. We define the endomorphisms \(\Li_X\) and \(\Lii_X\) by \(\Li_X Y = X \bullet Y\) and \(\Lii_X Y = X \star Y\), respectively. The Levi-Civita product \(\star\) is characterized by the properties that \(X \star Y - Y \star X = [X, Y]\), and \(\Lii_X\) is skew-symmetric with respect to \(\prs\).

Interestingly, although the Koszul 1-form and the second Koszul form generally depend on both \(\nabla\) and \(g\), in the case of Hessian Lie groups, they depend only on \(\nabla\). This is demonstrated by the following key result.

\begin{pr}\label{beta} Let $(G,\na,\prs)$ be a Hessian Lie group. Then its Koszul 1-form and second Koszul form are given by
	\[\al(X)=-\tr(\Li_X)\esp  \be(X,Y)=\tr(\Li_{X\bullet Y}),\quad X,Y\in\G. \]
\end{pr}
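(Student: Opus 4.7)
The plan is to compute both forms at the level of the associated Hessian algebra $(\G,\bullet,\prs)$, exploiting the fact that on a Lie group all the relevant tensors are left-invariant and can be read off from the two products $\bullet$ and $\star$ on $\G$.

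First I would observe that for left-invariant vector fields $X,Y$ the identity $\ga_XY=\D_XY-\na_XY=\Lii_XY-\Li_XY$ holds in $\G$, so $\ga_X=\Lii_X-\Li_X$ as endomorphisms of $\G$. Since $\Lii_X$ is skew-symmetric with respect to $\prs$ (this is the characterizing property of the Levi-Civita product $\star$ recalled just before the statement), we have $\tr(\Lii_X)=0$ and therefore
\[ \al(X)=\tr(\ga_X)=-\tr(\Li_X). \]
This establishes the first formula and, as a bonus, shows that $\al$ restricts to a well-defined element of $\G^*$, i.e.\ the Koszul 1-form is left-invariant on $G$.

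For the second Koszul form I would expand $\be(X,Y)=\na_X(\al)(Y)=X\cdot\al(Y)-\al(\na_XY)$. Because $\al$ is left-invariant, $\al(Y)$ is a constant function on $G$ for left-invariant $Y$, so the first term vanishes. Since $\na_XY=X\bullet Y$ at the algebra level, the second term equals $-\al(X\bullet Y)=\tr(\Li_{X\bullet Y})$, which is exactly the claimed formula.

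There is no substantive obstacle: the argument is entirely algebraic, reducing to the two facts that $\Lii_X$ is $\prs$-skew-symmetric (so its trace kills one half of $\tr(\ga_X)$) and that the Koszul 1-form is left-invariant (so the derivative term in $\na_X(\al)(Y)$ drops out).
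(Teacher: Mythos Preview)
Your proof is correct and follows exactly the same approach as the paper: compute $\ga_X=\Lii_X-\Li_X$, use skew-symmetry of $\Lii_X$ to get $\al(X)=-\tr(\Li_X)$, then use left-invariance of $\al$ to reduce $\be(X,Y)=\na_X(\al)(Y)$ to $-\al(X\bullet Y)=\tr(\Li_{X\bullet Y})$. You are slightly more explicit about why the derivative term $X\cdot\al(Y)$ vanishes, which the paper suppresses.
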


\begin{proof} For any $X\in\G$, $\ga_X=\Lii_X-\Li_X$ and since $\Lii_X$ is skew-symmetric, $\al(X)=\tr(\ga_X)=-\tr(\Li_X)$. Moreover, for any $X,Y\in\G$,
	\[ \be(X,Y)=\na_{X}(\al)(Y)=-\al(\na_XY)=-
	\al(X\bullet Y)=\tr(\Li_{X\bullet Y}) \]which completes the proof.
\end{proof}

\subsection{Proof of Theorem \ref{main}}
\begin{proof}
	Note first that according to \cite[Corollary 4]{shima1} that underlying Lie algebra of a Hessian algebra is solvable and
Theorem \ref{main} follows immediately from   Propositions \ref{einstein}-\ref{beta}.\end{proof}

\section{Left symmetric algebras with  positive definite Koszul form.}\label{section3}

Theorem \ref{main} emphasizes the importance of $\mathrm{LSPK}$. Accordingly, we dedicate this section to a complete study of this class of algebras culminating in Theorem \ref{theo}.

We start this study by addressing two important cases, namely, the associative case and the Novikov case.

Remark first that if a $\mathrm{LSPK}$ is commutative then it is associative. 
Recall that a left symmetric algebra $(\G,\bullet)$ is called Novikov if, for any $X,Y,Z\in\G$,
\begin{equation}\label{novikov} (X\bullet Y)\bullet Z=(X\bullet Z)\bullet Y. \end{equation}
Novikov algebras constitute an important subclass of left symmetric algebras and have been studied by many authors (see \cite{burde0, burde1}). As a consequence of \eqref{novikov}, the Koszul form of a Novikov algebra satisfies
\[ \mathrm{B}(X\bullet Y,Z)=\mathrm{B}(X\bullet Z,Y),\quad X,Y,Z\in\G. \]This relation combined with \eqref{6} implies that
\[ \mathrm{B}([X,Y],Z)=0,\quad X,Y,Z\in\G. \]
So if a $\mathrm{LSPK}$ is Novikov then its commutative and hence associative.
\begin{exem} Consider $(\R^n,\bullet)$ endowed with its canonical associative commutative product:
	\[ X\bullet Y=(X_1Y_1,\ldots,X_nY_n). \]
	It is easy to check that the Koszul form of 
	$(\R^n,\bullet)$ is the canonical Euclidean product of $\R^n$. Moreover, $(\R^n,\bullet)$ is a Novikov.
	
\end{exem}
It turns out that it is the only example of $\mathrm{LSPK}$ which is Novikov, commutative or associative.
\begin{theo}\label{ass} Let $(\G,\bullet)$ be a $\mathrm{LSPK}$ algebra. Then the following assertions are equivalent.
	\begin{enumerate}\item[$(i)$] $(\G,\bullet)$ is a Novikov algebra.
		\item[$(ii)$] $(\G,\bullet)$ is a commutative algebra.
		\item[$(iii)$] $(\G,\bullet)$ is an associative algebra.
	\end{enumerate}	Moreover, in this case $(\G,\bullet)$ is isomorphic to $\R^n$ endowed with its canonical associative commutative product.
\end{theo}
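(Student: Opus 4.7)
The material just above the theorem already establishes, in any LSPK, the implications Novikov $\Rightarrow$ commutative (combine \eqref{novikov} with \eqref{6} to obtain $\mathrm{B}([X,Y],Z)=0$ for all $Z$, then specialize $Z=[X,Y]$ and invoke positive definiteness) and commutative $\Rightarrow$ associative (a short computation using commutativity together with left symmetry). It therefore suffices to prove that an associative LSPK is isomorphic to $(\R^n,\cdot)$ with the canonical componentwise product; this simultaneously closes the cycle of equivalences and delivers the ``moreover'' clause.

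When $\bullet$ is associative, $X\mapsto \Li_X$ is an algebra homomorphism, so $\mathrm{B}(X,Y)=\tr(\Li_X\Li_Y)$. The crucial consequence of positive definiteness is that no nonzero $X$ can have $\Li_X$ nilpotent, since then $\mathrm{B}(X,X)=\tr(\Li_X^2)=0$. I would apply this to the Jacobson radical $J$ of the finite-dimensional associative $\R$-algebra $\G$: as $J$ is a nilpotent ideal with $J^k=0$ for some $k$, every $X\in J$ satisfies $\Li_X^k(\G)\subseteq J^k=0$, hence $\Li_X$ is nilpotent and $X=0$. Therefore $\G$ is semisimple.

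Applying the Artin--Wedderburn theorem over $\R$, write $\G\cong\prod_{i=1}^{r}M_{n_i}(D_i)$ with $D_i\in\{\R,\mathbb{C},\mathbb{H}\}$, and use two further positive-definiteness arguments to eliminate every non-trivial block. First, if some $n_i\geq 2$, the matrix unit $E_{12}$ sitting in the $i$-th factor (with zeros elsewhere) satisfies $E_{12}^2=0$, so $\Li_{E_{12}}$ is nilpotent, a contradiction. Second, if some $D_i\in\{\mathbb{C},\mathbb{H}\}$, any imaginary unit $\iota\in D_i$ satisfies $\iota^2=-1$, so $\Li_\iota^2$ acts as $-\mathrm{id}$ on the $i$-th factor and as $0$ elsewhere, giving $\mathrm{B}(\iota,\iota)=-\dim_\R D_i<0$, again a contradiction. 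Hence every block is $\R$, so $\G\cong\R^n$ with its canonical componentwise product. The one place where genuine care is needed is this last trace computation: the trace of $\Li_\iota^2$ must be taken on $\G$ viewed as a real vector space, not as a $D_i$-module, which is what produces $-\dim_\R D_i$ rather than $-1$; everything else is a direct corollary of Artin--Wedderburn together with the two ``nilpotent $\Rightarrow 0$'' observations.
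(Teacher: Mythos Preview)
Your argument is correct and follows essentially the same strategy as the paper: reduce the associative case to Artin--Wedderburn and then eliminate all factors other than $\R$. The one genuine difference is how semisimplicity is obtained. The paper observes that associativity makes $\mathrm{B}$ a trace form, $\mathrm{B}(X\bullet Y,Z)=\mathrm{B}(X,Y\bullet Z)$, so that the $\mathrm{B}$-orthogonal of any ideal is again an ideal and positive definiteness forces a decomposition into simple pieces; you instead kill the Jacobson radical directly by noting that any $X\in J$ has $\Li_X$ nilpotent and hence $\mathrm{B}(X,X)=\tr(\Li_X^2)=0$. Both routes are clean; yours is perhaps more self-contained, while the paper's trace-form observation makes the decomposition into simple ideals more structurally transparent. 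The elimination of the blocks $M_n(D)$ with $n\geq 2$ and of $D\in\{\mathbb{C},\mathbb{H}\}$ is handled the same way in both proofs, and your remark about computing $\tr(\Li_\iota^2)$ over $\R$ rather than over $D$ is exactly the point behind the paper's formulas $\mathrm{B}(X,Y)=2\mathfrak{R}(XY)$ and $4\mathfrak{R}(XY)$.
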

\begin{proof} To prove the theorem, we will show that $(iii)$ implies that $(\G,\bullet)$ is isomorphic to $\R^n$ endowed with its canonical associative commutative product. Suppose that $(\G,\bullet)$ is an associative algebra with positive definite Koszul form. In this case the relation $\ass(X,Y,Z)=0$ implies that the Koszul form $\mathrm{B}$ satisfies
	\[ \mathrm{B}(X\bullet Y,Z)=\mathrm{B}(X,Y\bullet Z). \]
	Since $\mathrm{B}$ is nondegenerate, $\mathrm{B}$ becomes a trace form on $(\G,\bullet)$. This implies that if $I$ is an ideal of $(\G,\bullet)$ then its orthogonal $I^\perp$ is also an ideal and $\G$ is semi-simple and splits $\G=\G_1\oplus\ldots\oplus \G_r$ where each $\G_i$ is a simple   associative algebra with positive definite Koszul form. The classification of finite-dimensional simple associative algebras over \( \mathbb{R} \) is quite elegant and follows from the Artin-Wedderburn theorem (see \cite{vinberg1} for instance), which says that any finite-dimensional simple associative algebra over \( \mathbb{R} \) is isomorphic to a matrix algebra over a division algebra over \( \mathbb{R} \).
	
	There are only three types of finite-dimensional division algebras over \( \mathbb{R} \):
		 \( \mathbb{R} \) itself,
		 \( \mathbb{C} \) and
		the quaternions \( \mathbb{H} \).
		So, a finite-dimensional simple associative algebra over \( \mathbb{R} \) is isomorphic to a matrix algebra \( M_n(D) \), where \( D \) is one of these division algebras and \( n \geq 1 \). Let us show first that if $n\geq2$, the Koszul form of $M_n(D)$ is not positive definite. Indeed, in the three cases, one can construct a nilpotent matrix $A\not=0$ satisfying $A^2=0$ and hence $\mathrm{B}(A,A)=0$. For $n=1$, we have
		\[ \mathrm{B}(X,Y)=\begin{cases}
			XY\quad\mbox{if}\quad D=\R,\\
			2\mathfrak{R} (XY)\quad\mbox{if}\quad D=\mathbb{C},\\
			4\mathfrak{R} (XY)\quad\mbox{if}\quad D=\mathbb{H}.
		\end{cases} \]
	The Koszul form is positive definite if and only if $n=1$ and $D=\R$. This completes the proof.
	\end{proof}

Let us now study the general case.
Let $(\G, \bullet)$ be a left symmetric algebra with the associated Lie bracket $\br$. We denote by $\Li$ and $\Ri$ the left and right multiplication operators associated with $\bullet$, respectively.

Suppose that the Koszul form
$\mathrm{B}(X,Y)=\tr(\Li_{X\bullet Y})$ is  positive definite.    We have seen that $\mathrm{B}$ satisfies \eqref{6} and $(\G,\bullet,\mathrm{B})$ becomes a Hessian algebra. For any vector subspace $V\in\G$, $V^\perp$ denotes its orthogonal with respect to $\mathrm{B}$. 

Since $\mathrm{B}$ is nondegenerate there exists a non zero vector $H$ satisfying
$\mathrm{B}(X,H)=\tr(\Li_X)$,
for any $X\in\G$. Put $\h=H^\perp$.
From the definition of $\mathrm{B}$ we get
\begin{equation}\label{Hi}
	\mathrm{B}(X,H)=\tr(\Li_{X\bullet H})=\mathrm{B}(X\bullet H,H)=\mathrm{B}(H\bullet X,H),\quad X\in\G.
\end{equation}

\begin{pr}\label{hp}We have
	$\Li_H(\h)\subset \h$, $\Ri_H(\h)\subset \h$,  $\Ri_H$ is symmetric with respect to $B$ and
	$H\bullet H= H$.

\end{pr}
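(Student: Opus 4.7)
The plan is to derive each of the four claims directly from the structural identity \eqref{6} satisfied by $\mathrm{B}$ together with the defining relation \eqref{Hi} for the vector $H$. The main tool beyond these will be the algebraic identity $\Li_{[X,Y]} = [\Li_X, \Li_Y]$ valid in any left symmetric algebra, which one reads off immediately from the condition $\ass(X,Y,Z)=\ass(Y,X,Z)$ by rewriting it as $(X\bullet Y - Y\bullet X)\bullet Z = \Li_X\Li_YZ - \Li_Y\Li_XZ$. The commutator character of this identity gives the crucial vanishing $\mathrm{B}([X,Y], H) = \tr(\Li_{[X,Y]}) = \tr([\Li_X,\Li_Y]) = 0$.

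I would handle the two inclusions first, as they are immediate. If $X \in \h$, i.e., $\mathrm{B}(X, H) = 0$, then by \eqref{Hi} both $\mathrm{B}(H \bullet X, H)$ and $\mathrm{B}(X \bullet H, H)$ equal $\mathrm{B}(X, H) = 0$, hence $\Li_H(\h) \subset \h$ and $\Ri_H(\h) \subset \h$.

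Next I would establish the symmetry of $\Ri_H$. Specialising \eqref{6} to $Z = H$ yields
\begin{equation*}
\mathrm{B}(X\bullet H, Y) - \mathrm{B}(Y\bullet H, X) = -\mathrm{B}(X\bullet Y - Y\bullet X, H) = -\mathrm{B}([X,Y], H).
\end{equation*}
Because $\mathrm{B}([X,Y], H) = \tr(\Li_{[X,Y]}) = \tr([\Li_X, \Li_Y]) = 0$ by the identity noted above, the right-hand side vanishes, proving that $\Ri_H$ is $\mathrm{B}$-symmetric.

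Finally, for $H\bullet H = H$, I would substitute $Y = Z = H$ into \eqref{6} to obtain
\begin{equation*}
\mathrm{B}(X\bullet H - H\bullet X, H) = \mathrm{B}(H\bullet H, X) - \mathrm{B}(X\bullet H, H).
\end{equation*}
By \eqref{Hi}, the left-hand side is $\mathrm{B}(X,H) - \mathrm{B}(X,H) = 0$, and the last term on the right is again $\mathrm{B}(X,H)$, so $\mathrm{B}(H\bullet H, X) = \mathrm{B}(H, X)$ for every $X \in \G$. Non-degeneracy of $\mathrm{B}$ then gives $H\bullet H = H$. There is no real obstacle here; the only mildly non-obvious point is recognizing early that the symmetry of $\Ri_H$ hinges on the Lie-algebraic trace identity $\tr([\Li_X,\Li_Y])=0$, which in turn relies on left symmetry rather than on the inner-product structure.
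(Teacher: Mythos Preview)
Your proof is correct and follows essentially the same approach as the paper: the inclusions come from \eqref{Hi}, and the symmetry of $\Ri_H$ from \eqref{6} together with $\tr(\Li_{[X,Y]})=0$. For $H\bullet H=H$ the paper argues slightly differently---it uses that $\Ri_H$ is symmetric and preserves $\h$, hence preserves $\R H=\h^\perp$, so $H\bullet H=\mu H$, and then \eqref{Hi} forces $\mu=1$---whereas you substitute $Y=Z=H$ directly into \eqref{6}; both arguments are equally short and valid.
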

\begin{proof} The inclusions $\Li_H(\h)\subset \h$ and $\Ri_H(\h)\subset \h$ follow immediately from \eqref{Hi}. Moreover, the relation \eqref{6} gives
	\[ \mathrm{B}([X,Y],H)=\mathrm{B}(\Ri_HY,X)-
	\mathrm{B}(\Ri_HX,Y). \]
	But $\mathrm{B}([X,Y],H)=\tr(\Li_{[X,Y]})=0$  and $\Ri_H^*=\Ri_H$.
	Since $\Ri_H$ is symmetric and leaves $\h$ invariant then there exists $\mu$ such that $H\bullet H=\mu H$. Moreover, $\mathrm{B}(H,H)=\mathrm{B}(H\bullet H,H)=\mu \mathrm{B}(H,H)$ and hence $\mu=1$.
\end{proof}

We have $\G=\R H\oplus \h$ and, for any $X,Y\in\h$, there exists a unique $X\circ Y\in\h$ such that
\begin{equation*}\label{bullet} X\bullet Y=X\circ Y+\frac1\rho \mathrm{B}(X,Y)H. \end{equation*} where $\rho=\mathrm{B}(H,H)$. Define $A:\h\too\h$, $X\mapsto H\bullet X$ and $S:\h\too\h$, $X\mapsto X\bullet H$ ($S$ is symmetric) and denote by $\prs$ the restriction of $\frac1\rho \mathrm{B}$ to $\h$. The left multiplication operator  and the associator associated to $\circ$ will be denoted by $\Li^\circ$ and $\ass_\circ$, respectively. For any endomorphism $F$ of $\h$, $F^*$ is its adjoint with respect to $\prs$. To summarize, we have, for any $X,Y\in\h$,
\begin{equation}\label{product}
	X\bullet Y=X\circ Y+ \langle X,Y\rangle H,\; H\bullet X=AX,\; X\bullet H=SX\esp H\bullet H=H.	
\end{equation}Since, for any $X\in\h$,  $\mathrm{B}(X,H)=\tr(\Li_X)=0$, we deduce that $\tr(\Li_X^\circ)=0$.

\begin{pr}\label{h} Let $(\G,\bullet)$ be a $\mathrm{LSPK}$. With the notations above,
	for any $X,Y,Z\in\h$, 
	\begin{equation} \label{AS}\begin{cases}\langle X\circ Y-Y\circ X,Z\rangle=\langle Y\circ Z,X\rangle-\langle X\circ Z,Y\rangle,\\
			\mathrm{ass}_\circ(X,Y,Z)
			-\mathrm{ass}_\circ(Y,X,Z)=
			\left( \langle Y,Z\rangle SX-\langle X,Z\rangle SY\right),\\
			S([X,Y])=X\circ SY-Y\circ SX,\\
			A(X\circ Y)=AX\circ Y+X\circ AY-SX\circ Y,\\
			S=A+A^*-\mathrm{Id}_\h,\;
			[S,A]=S^2-S,\\
			[X,Y]=X\circ Y-Y\circ X,\quad\tr(\Li_X^\circ)=0.
		\end{cases}
	\end{equation}

\end{pr}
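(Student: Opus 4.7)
The proposition is an exhaustive case analysis: the plan is to substitute the decomposition \eqref{product} into the two structural identities of a $\mathrm{LSPK}$ — the Hessian-type relation \eqref{6} and left-symmetry $\ass(X,Y,Z)=\ass(Y,X,Z)$ — evaluated on triples drawn from $\G=\R H\oplus\h$, and then project onto the $H$- and $\h$-components. Throughout I would use that $\mathrm{B}$ restricts on $\h$ to $\rho\langle\cdot,\cdot\rangle$ with $\mathrm{B}(\h,H)=0$, that $S$ is symmetric by Proposition \ref{hp}, and that $H\bullet H=H$.

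First I would dispatch identities 1 and 6 together: applied to $X,Y,Z\in\h$, \eqref{6} gives identity 1 (after dividing by $\rho$), while the $H$-components of $X\bullet Y-Y\bullet X$ cancel by symmetry of $\langle\cdot,\cdot\rangle$ to yield $[X,Y]=X\circ Y-Y\circ X$. The trace $\tr(\Li_X^\circ)=0$ for $X\in\h$ follows from $\Li_X(H)=SX\in\h$ (no contribution to the trace from the $H$-row in the block decomposition $\G=\h\oplus\R H$), so $\tr(\Li_X^\circ)=\tr(\Li_X)=\mathrm{B}(X,H)=0$.

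Next I would obtain the remaining identities by expanding the associator in all relevant configurations of $H$'s and $\h$-vectors. For identity 2, with $X,Y,Z\in\h$, expansion gives
\[\ass(X,Y,Z)=\ass_\circ(X,Y,Z)+\bigl(\langle X\circ Y,Z\rangle-\langle X,Y\circ Z\rangle\bigr)H+\langle X,Y\rangle AZ-\langle Y,Z\rangle SX;\]
subtracting the $X\leftrightarrow Y$ swap, the symmetric term $\langle X,Y\rangle AZ$ cancels and the $H$-component vanishes by identity 1, so the $\h$-projection of left-symmetry is exactly identity 2. For identity 3, expanding $\ass(X,Y,H)=\ass(Y,X,H)$ (using $H\bullet H=H$) gives $S(X\circ Y-Y\circ X)=X\bullet SY-Y\bullet SX$; the $H$-part $(\langle X,SY\rangle-\langle Y,SX\rangle)H$ vanishes by symmetry of $S$, leaving identity 3 on $\h$. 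Identity 4 together with $S=A+A^*-\mathrm{Id}_\h$ both fall out of $\ass(H,Y,Z)=\ass(Y,H,Z)$: the $\h$-component is precisely identity 4, while the $H$-component reads $\langle AY,Z\rangle+\langle Y,AZ\rangle-\langle Y,Z\rangle=\langle SY,Z\rangle$ for all $Y,Z\in\h$. Finally, $\ass(H,Y,H)=\ass(Y,H,H)$ reduces directly to $S(AY)-A(SY)=S^2Y-SY$, i.e.\ $[S,A]=S^2-S$.

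There is no conceptual obstacle; the only point requiring vigilance is that the $H$-projection of each left-symmetry relation produces what looks like an auxiliary constraint, but in every case these are either automatically satisfied (thanks to identity 1 or the symmetry of $S$) or they deliver exactly the relation $S=A+A^*-\mathrm{Id}_\h$, so nothing beyond the six listed identities arises.
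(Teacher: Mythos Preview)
Your proposal is correct and follows essentially the same route as the paper: both substitute the decomposition \eqref{product} into the Hessian relation \eqref{6} and into the left-symmetry of the associator, run through the various configurations of $\h$-vectors and $H$, and read off the listed identities from the $\h$- and $H$-components. The only cosmetic difference is that the paper also extracts $S=A+A^*-\mathrm{Id}_\h$ from the Hessian relation applied to $(X,H,Y)$, whereas you obtain it from the $H$-component of $\ass(H,Y,Z)=\ass(Y,H,Z)$; both derivations are valid and the paper in fact records the latter as well.
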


\begin{proof}We have, for any $U,V,W\in\G$,
	\[ \mathrm{B}(U\bullet V-U\bullet V,W)=\mathrm{B}(V\bullet W,U)-\mathrm{B}(U\bullet W,V). \]
	By using the splitting $\G=\h\oplus\R H$ and \eqref{bullet}, this relation gives:
	\begin{itemize}
		\item 
		For any $X,Y,Z\in\h$, 
		\[ \langle X\circ Y-Y\circ X,Z\rangle=\langle Y\circ Z,X\rangle-\langle X\circ Z,Y\rangle. \]
		
		\item For $X,Y\in\h$, 
		\[ \mathrm{B}(X\bullet Y-Y\bullet X,H)=\mathrm{B}(Y\bullet H,X)-\mathrm{B}(X\bullet H,Y). \]
		But $\mathrm{B}(X\bullet Y-Y\bullet X,H)=0$	and hence $S$ is symmetric.
		
		\item For any $X,Y\in\h$, 
		\[ \mathrm{B}(X\bullet H-H\bullet X,Y)=\mathrm{B}(H\bullet Y,X)-\mathrm{B}(X\bullet Y,H). \]	This can be written
		\[ \mathrm{B}((S-A)X,Y)=\mathrm{B}(AY,X)-\mathrm{B}(X,Y) \]and hence
		\[ S=A+A^*-\mathrm{Id}_\h. \]
		\item For any $X\in\h$, 
		\[ \mathrm{B}(X\bullet H-H\bullet X,H)=\mathrm{B}(H\bullet H,X)-\mathrm{B}(X\bullet H,H) \]	 holds.
	\end{itemize}
	On the other hand, for any $U,V,W\in \G$,
	\[ \ass(U,V,W)=\ass(V,U,W). \]
	Let us expand this relation by using \eqref{bullet}.		
	\begin{itemize}
		\item 
		For any $X,Y,Z\in\h$,
		\begin{align*}
			\mathrm{ass}(X,Y,Z)&=(X\circ Y+\langle X,Y\rangle H)\bullet Z-X\bullet (Y\circ Z+ \langle Y,Z\rangle H)\\
			&=\mathrm{ass}_\circ(X,Y,Z)+\left( \langle X\circ Y,Z\rangle-\langle X,Y\circ Z\rangle\right)H+\left(\langle X,Y\rangle A Z-\langle Y,Z\rangle SX \right),\\
			\mathrm{ass}(Y,X,Z)&=
			\mathrm{ass}_\circ(Y,X,Z)
			+\left( \langle Y\circ X,Z\rangle-\langle Y,X\circ Z\rangle \right)H+\left(\langle X,Y\rangle A Z-\langle X,Z\rangle SY \right)
		\end{align*}
		So
		\[ \begin{cases}
			\langle X\circ Y-Y\circ X,Z\rangle=\langle Y\circ Z,X\rangle-\langle X\circ Z,Y\rangle,\\
			\mathrm{ass}_\circ(X,Y,Z)
			-\mathrm{ass}_\circ(Y,X,Z)=
			\left( \langle Y,Z\rangle SX-\langle X,Z\rangle SY\right).
		\end{cases} \]
		\item For any $X,Y\in\h$,
		\begin{align*}
			\ass(X,Y,H)&=S(X\circ Y)+ \langle X,Y\rangle H     -X\circ SY- \langle X,SY\rangle H
		\end{align*}
		So
		\[ S([X,Y])=X\circ SY-Y\circ SX. \]
		
		\item For any $X,Y\in\h$,
		\begin{align*}
			\mathrm{ass}(X,H,Y)&,=(X\bullet H)\bullet Y-X\bullet (H\bullet Y)\\
			&=SX\circ Y+ \langle SX,Y\rangle H-X\circ AY-\langle X,AY\rangle H,\\
			\ass(H,X,Y)&=AX\circ Y+ \langle AX,Y\rangle H-A(X\circ Y)-
			\langle X,Y\rangle H\bullet H.
		\end{align*}So
		\[ SX\circ Y+A(X\circ Y)=AX\circ Y+X\circ AY\esp
		S=A+A^*-\mathrm{Id}_\h.
		\]
		
		\item For any $X,Y\in\h$,	\begin{align*}
			\ass(X,Y,H)&=S(X\circ Y)+ \langle X,Y\rangle H     -X\circ SY-\langle X,SY\rangle H
		\end{align*}
		So
		$S([X,Y])=X\circ SY-Y\circ SX.$
		\item For any $X\in\h$,	\begin{align*}
			\ass(X,H,H)&=S^2X-SX,\;\;
			\ass(H,X,H)=SAX-ASX.
		\end{align*}So
		$[S,A]=S^2-S.$ This completes the proof.\qedhere
	\end{itemize}
	
\end{proof}

The following two lemmas will be highly useful in completing our study.

\begin{Le}\label{useful} Let $(V,\prs)$ be an Euclidean vector space and $S,A$ two endomorphisms such that $S$ is symmetric 	and
	\[S=A+A^*-\mathrm{Id}_V,
	[S,A]=S^2-S.\]
	Then $V=V_1\oplus V_2$ where $V_2=V_1^\perp$, $V_1,V_2$ are invariant by $A$ and $S$ and 
	\[ S_{|V_1}=0,\; S_{|V_2}=\mathrm{Id}_{V_2},\;
	A_{|V_1}=B_1+\frac12\mathrm{Id}_{V_1},\; A_{|V_2}=B_2+\mathrm{Id}_{V_2} \]where $B_1:V_1\too V_1$ and $B_2:V_2\too V_2$ are skew-symmetric. 
\end{Le}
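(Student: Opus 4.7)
The plan is to decompose $A$ into its symmetric and skew-symmetric parts relative to $\prs$, use the first hypothesis to identify the symmetric part, and then read off the spectrum of $S$ from the commutator relation.

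First I would write $A=\frac12(A+A^*)+\frac12(A-A^*)$. The identity $S=A+A^*-\mathrm{Id}_V$ immediately gives $\frac12(A+A^*)=\frac12(S+\mathrm{Id}_V)$, so setting $K=\frac12(A-A^*)$ (a skew-symmetric operator) yields $A=\frac12(S+\mathrm{Id}_V)+K$. Plugging this into $[S,A]=S^2-S$ and using $[S,S+\mathrm{Id}_V]=0$ reduces the second hypothesis to the cleaner relation $[S,K]=S^2-S$.

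Next I would exploit the spectral decomposition of $S$: since $S$ is symmetric, $V$ is the orthogonal direct sum of its real eigenspaces $V_\lambda$. Pick an eigenvalue $\lambda$ of $S$ and a nonzero $v\in V_\lambda$, and decompose $Kv=\sum_\mu w_\mu$ with $w_\mu\in V_\mu$. On one hand $[S,K]v=(S-\lambda\,\mathrm{Id}_V)Kv=\sum_\mu(\mu-\lambda)w_\mu$, which lies in $\bigoplus_{\mu\neq\lambda}V_\mu$; on the other hand $(S^2-S)v=\lambda(\lambda-1)v\in V_\lambda$. Comparing the $V_\lambda$-components forces $\lambda(\lambda-1)=0$, and comparing the remaining components gives $w_\mu=0$ for $\mu\neq\lambda$, so $K$ preserves every eigenspace of $S$.

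The conclusion then follows painlessly: the spectrum of $S$ is contained in $\{0,1\}$, so $V_1=\ker S$ and $V_2=\ker(S-\mathrm{Id}_V)$ give the required orthogonal splitting, both invariant under $K$ and hence under $A$, and the formulas $A_{|V_1}=\frac12\mathrm{Id}_{V_1}+B_1$, $A_{|V_2}=\mathrm{Id}_{V_2}+B_2$ fall out of $A=\frac12(S+\mathrm{Id}_V)+K$ by taking $B_i=K_{|V_i}$, which inherits skew-symmetry from $K$. The only subtle point is the bookkeeping in step two: one must notice that the $V_\lambda$-component of $(S-\lambda\,\mathrm{Id}_V)Kv$ vanishes automatically, since this orthogonality between the image and kernel of the symmetric operator $S-\lambda\,\mathrm{Id}_V$ is exactly what forces $\lambda\in\{0,1\}$ rather than yielding only a weaker constraint.
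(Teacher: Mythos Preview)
Your proof is correct and takes essentially the same approach as the paper: both diagonalize the symmetric operator $S$ and use the commutator relation in an eigenbasis to force $\lambda(\lambda-1)=0$. The only cosmetic difference is that you extract the skew part $K=\frac12(A-A^*)$ at the outset and run the argument uniformly over all eigenspaces, whereas the paper first splits $V=\ker S\oplus\mathrm{Im}\,S$, checks directly that $A,A^*$ preserve both summands, and then carries out the eigenvalue computation only on $\mathrm{Im}\,S$.
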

\begin{proof} Since $S$ is symmetric, we have $V=V_1\oplus V_2$ where $V_1=\ker S$ and $V_2=\mathrm{Im}S$. 
	The relation $[A^*,S]=S^2-S$ and its adjoint $[S,A]=S^2-S$ imply that  $A,A^*$ leaves invariant $V_1$ and $V_2$.

	In restriction to $V_1$, $A-\frac12\mathrm{Id}_{V_1}$ is skew-symmetric and hence $A_{|V_1}=B_1+
	\frac12\mathrm{Id}_{V_1}$ and $B_1:V_1\too V_1$ is skew-symmetric.
	
	In restriction to $V_2$, $S$ is invertible and  diagonalizable. So there exists an orthonormal basis $(e_1,\ldots,e_r)$ of $V_2$ such that $S(e_i)=\la_i e_i$.
	For any $i\in\{1,\ldots,r\}$,	
	\[ SA(e_i)-\la_iA(e_i)=(\la_i^2-\la_i)e_i. \]
	So
	\[ \sum_{j=1}^n(\la_j-\la_i) a_{ji}e_j= (\la_i^2-\la_i)e_i.\]This implies that
	$\la_i=1$ hence $S_{|V_2}=\mathrm{Id}_{V_2}$ and $A_{|V_2}=B_2+\mathrm{Id}_{V_2}$. This completes the proof.
\end{proof}	

\begin{Le}\label{useful1}
	Let $(\h,\circ,\prs)$ be a Hessian algebra. Suppose that there exists a skew-symmetric endomorphism $B$ such that, for any $X,Y\in\h$,
	\[ B(X\circ Y)=B(X)\circ Y+Y\circ B(X)+\frac12 X\circ Y. \]Then $\circ=0$.
\end{Le}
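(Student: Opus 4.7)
The plan is to exploit the one-parameter group of orthogonal transformations $t\mapsto e^{tB}$ generated by the skew-symmetric operator $B$ and to pick a non-negative scalar quantity whose evolution along this flow forces it to vanish. The natural candidate is the quartic $F_X(t):=\|e^{tB}X\circ e^{tB}X\|^2$.

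First I would specialise the hypothesis to $Y=X$, giving $B(X\circ X)=BX\circ X+X\circ BX+\tfrac12 X\circ X$. Writing $Y(t)=e^{tB}X$ (so $Y'(t)=BY(t)$) and differentiating $F_X$, the skew-symmetry of $B$ kills the term $\langle B(Y(t)\circ Y(t)),Y(t)\circ Y(t)\rangle$ and leaves the autonomous ODE $F_X'(t)=-F_X(t)$, hence $F_X(t)=F_X(0)\,e^{-t}$. Since $e^{tB}$ is orthogonal, the orbit of $X$ is contained in a sphere, so $F_X$ is bounded on $\R$; because $F_X(0)\ge 0$, letting $t\to-\infty$ forces $F_X(0)=0$. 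Thus $X\circ X=0$ for every $X\in\h$, and polarization yields the anti-commutativity $X\circ Y+Y\circ X=0$.

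Combining this with the Hessian-algebra identity $X\circ Y-Y\circ X=[X,Y]$ gives $X\circ Y=\tfrac12[X,Y]$. Plugging this back into the hypothesis and using $[Y,BX]=-[BX,Y]$ collapses the identity to $B[X,Y]=\tfrac12[X,Y]$. Pairing with $[X,Y]$ and using skew-symmetry of $B$ one more time yields $\tfrac12\|[X,Y]\|^2=\langle B[X,Y],[X,Y]\rangle=0$, so $[\h,\h]=0$ and consequently $\circ\equiv 0$.

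The main obstacle is the choice of test function. The coefficient $\tfrac12$ appearing in the hypothesis is precisely what converts the skew-symmetry of $B$ into the clean decay law $F_X'=-F_X$, and choosing the quartic $\|X\circ X\|^2$ rather than a lower-degree invariant such as the cubic $\langle X\circ X,X\rangle$ is what delivers $X\circ X=0$ in a single step, bypassing the need to chase further polarization identities before the anti-commutativity argument can be applied.
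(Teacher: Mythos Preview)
Your proof is correct and takes a genuinely different route from the paper's argument. The paper works structurally: it diagonalises the skew-symmetric $B$, writes down the relation $[B,\Li_X]=\tfrac12\Li_X$ for $X\in\ker B$ and the coupled relations on the two-dimensional invariant planes of $B$, then uses the bi-invariant inner product $\mu(A,C)=\tr(AC^{t})$ on $\mathrm{so}(\h,\prs)$ to force $\Li_X=\Li_X^{*}$ and $\tr(\Li_X)=0$; only after that does it invoke the Hessian identity and left-symmetry to pass through commutativity, associativity, and finally $\Li_X=0$. Your approach bypasses all of this bookkeeping: the orthogonal flow $e^{tB}$ converts the coefficient $\tfrac12$ in the hypothesis into the exact decay law $F_X'=-F_X$ for $F_X(t)=\|e^{tB}X\circ e^{tB}X\|^{2}$, and since the orbit lies in a compact sphere this forces $X\circ X=0$ in one stroke; anticommutativity then collapses the hypothesis to $B[X,Y]=\tfrac12[X,Y]$ and skew-symmetry finishes. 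A pleasant by-product is that your argument never uses the Hessian metric condition or left-symmetry, so you have in fact proved a stronger statement than the lemma claims.

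One caveat worth recording: the displayed hypothesis in the lemma has $Y\circ B(X)$, whereas the relation actually derived and used in the paper's Appendix is the derivation-type identity with $X\circ B(Y)$. Your final cancellation $BX\circ Y+Y\circ BX=0$ is specific to the form as stated. Your flow idea still handles the Appendix version, though: after anticommutativity one obtains $B[X,Y]=[BX,Y]+[X,BY]+\tfrac12[X,Y]$, and running the identical decay argument on $t\mapsto\|[e^{tB}X,e^{tB}Y]\|^{2}$ (now over the compact product of two spheres) again yields $[X,Y]=0$.
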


\begin{proof} Not first $\mu(A,B)=\tr(AB^t)$ defines on the Lie algebra $\mathrm{so}(\h,\prs)$  a scalar product  satisfying  
	\[ \mu([A,C],D)+\mu(C,[A,D])=0,\quad A,C,D\in\mathrm{so}(\h,\prs). \]
	
	Since $B$ is skew-symmetric, $\h=\ker B\oplus \mathrm{Im}B$ and  there exists $(\la_1,\ldots,\la_s)$ and an orthonormal basis $(e_1,f_1,\ldots,e_s,f_s)$ of $\mathrm{Im}B$ such that $B(e_j)=\la_j f_j$ and $B(f_j)=-\la_j e_j$, $j\in\{1,\ldots,s\}$. 
	
	Let $X\in\ker B$. Then $$[B,\Li_X]=\frac12\Li_X. $$ and hence $\tr(\Li_X)=0$. Since $B$ is skew-symmetric then $[B,\Li_X^*]=\frac12\Li_X^*$ and hence
	\[ [B,\Li_X-\Li_X^*]=\frac12(\Li_X-\Li_X^*). \]By applying $\mu$ to  we get that $\Li_X-\Li_X^*=0$ and hence $\Li_X$ is symmetric.
	
	On the other hand, fix $i\in\{1,\ldots,s\}$ and put $(X,Y)=(e_i,f_i)$,  $E=\Li_X-\Li_X^*$ and $F=\Li_Y-\Li_Y^*$. Then
	\[ [B,\Li_X]=\la \Li_Y+\frac12\Li_X\esp [B,\Li_Y]=-\la \Li_X+\frac12\Li_Y. \]This relation implies obviously that $\tr(\Li_X)=\tr(\Li_Y)=0$.
	Since $B$ is skew-symmetric, we deduce that
	\[ [B,E]=\la F+\frac12E\esp [B,F]=-\la E+\frac12 F. \]	
	By applying $\mu$, we get
	\[ \begin{cases}
		\frac12\mu(E,E)+\la\mu(E,F)=0,\\
		-\la \mu(E,F)+\frac12\mu(F,F)=0.
	\end{cases} \]	The discriminant of of this system is equal to $\frac14+\la^2\not=0$ we deduce that $E=F=0$. So far, we have shown that, for any $X\in\h$, $\Li_X$ is symmetric and $\tr(\Li_X)=0$. This property and the relation 
	\[ \langle X\circ Y-Y\circ X,Z\rangle =\langle Y\circ Z,X\rangle-\langle X\circ Z,Y\rangle \]
	implies that $\circ$ is commutative. But a commutative left symmetric product must be associative and hence, for any $X,Y\in\h$, $\Li_{X\circ Y}=\Li_X\circ \Li_Y$. This implies that $\tr(\Li_X^2)=0$ and since $\Li_X$ is symmetric, we get that $\Li_X=0$. This completes the proof.
\end{proof}

We have seen in Proposition \ref{h} that the study of left symmetric algebras with definite positive Koszul form reduces to the sturdy of Euclidean algebras $(\h,\circ,\prs)$ endowed with two endomorphism $A,S$ where $S$ is symmetric and the system \eqref{AS} holds.
Let $(\h,\circ,\prs)$ be a such algebra. According to Lemma \ref{useful},
$\h=\h_1\oplus\h_2$ such that $S_{|\h_1}=0$, $S_{|\h_2}=\mathrm{Id}_{\h_2}$, $A_{|\h_1}=B_1+\frac12\mathrm{Id}_{\h_1}$ and 
$A_{|\h_2}=B_2+\mathrm{Id}_{\h_2}$ with $B_1$ and $B_2$ are skew-symmetric.

For any $X,Y\in\h_1$ and $Z,T\in\h_2$, put
\[ X\circ Y=X\circ_1 Y+\om_{1}(X,Y)\esp Z\circ T=\om_2(Z,T)+Z\circ_2T \]where $X\circ_1Y,
\om_2(Z,T)\in\h_1$ and $Z\circ_2T,
\om_1(X,Y)\in\h_2$. From the third relation in \eqref{AS}, We deduce that $S([X,Y])=0$ and $S([Z,T])=[Z,T]$ and $\h_1$ and $\h_2$ are two subalgebras and $\om_1$ and $\om_2$ are symmetric.

On the other hand, for any $X\in\h_1$ and $Y\in\h_2$, from the third relation in \eqref{AS}, we get $S([X,Y])=X\circ Y$ and hence $X\circ Y\in \mathrm{Im}S=\h_2$. Furthermore,
\[ S([X,Y])=S(X\circ Y)-S(Y\circ X)=X\circ Y. \]ans since $S(X\circ Y)=X\circ Y$, we deduce that $Y\circ X\in\ker S=\h_1$.

Define $\rho_1:\h_1\too \mathrm{End}(\h_2)$ and $\rho_2:\h_2\too \mathrm{End}(\h_1)$ by putting
\[ \rho_1(X)(Y)=X\circ Y\esp \rho_2(Y)(X)=Y\circ X,\quad X\in\h_1, Y\in\h_2. \]
So far, we have shown that $\h=\h_1\oplus\h_2$,  there exists two products $\circ_i$ on $\h_i$ for $i=1,2$, $\om_1:\h_1\times\h_1\too\h_2$, $\om_2:\h_2\times\h_2\too\h_1$ symmetric, $\rho_1:\h_1\too \mathrm{End}(\h_2)$ and $\rho_2:\h_2\too \mathrm{End}(\h_1)$ such that the product $\circ$ is given by
\begin{equation}\label{match}
	X\circ Y=\begin{cases}
		X\circ_1 Y+\om_1(X,Y),\quad\mbox{if}\quad X,Y\in\h_1,\\
		\om_2(X,Y)+X\circ_2 Y,\quad\mbox{if}\quad X,Y\in\h_2,\\
		\rho_1(X)(Y)\quad\mbox{if}\quad X\in\h_1,Y\in\h_2,\\
		\rho_2(X)(Y)\quad\mbox{if}\quad X\in\h_2,Y\in\h_1.
\end{cases}\end{equation}$S_{|\h_1}=0$, $S_{|\h_2}=\mathrm{Id}_{\h_2}$, $A_{|\h_1}=B_1+\frac12\mathrm{Id}_{\h_1}$ and 
$A_{|\h_2}=B_2+\mathrm{Id}_{\h_2}$ with $B_1$ and $B_2$ are skew-symmetric. We denote by $\prs_i$ the restriction of $\prs$ to $\h_i$. For $X\in\h_i$, we have
\[ \tr(\Li_X)=\tr(\Li_X^{\circ_i})+\tr(\rho_i(X)). \]
On the other hand, it is to check that the first relation in \eqref{AS} is equivalent to
\begin{equation}\label{omega} \begin{cases}
		\langle X\circ_iY-Y\circ_iX,Z\rangle_i=\langle Y\circ_iZ,X\rangle_i-\langle X\circ_iZ,Y\rangle_i,\; X,Y,Z\in\h_i,\\
		\langle\om_1(X,Y),Z\rangle_2=\langle \rho_2(Z)(X),Y\rangle_1+\langle \rho_2(Z)(Y),X\rangle_1,\quad X,Y\in\h_1,Z\in\h_2, \\
		\langle\om_2(X,Y),Z\rangle_1=\langle \rho_1(Z)(X),Y\rangle_2+\langle \rho_1(Z)(Y),X\rangle_2,\quad X,Y\in\h_2,Z\in\h_1.
\end{cases} \end{equation}
This shows that $\om_1$ and $\om_2$ are defined by $\rho_1$ and $\rho_2$ via the metrics.

Next, we expand \eqref{AS} using \eqref{match} and, crucially, we find that $(\h_1, \circ_1, \prs_1, B_1)$ satisfies the conditions of Lemma \ref{useful1}, leading to the conclusion that $\circ_1 = 0$.
 The details of the computation are given in the Appendix.
\begin{pr}\label{mainpr} $(\h,\circ,\prs)$ satisfies \eqref{AS} if and only if $\rho_1:\h_1\too\mathrm{so}(\h_2,\prs_2)$ and $\rho_2:\h_2\too\mathrm{so}(\h_1,\prs_1)$ are two representations of Lie algebras, $\circ_1=0$, $B_1$ is skew-symmetric, $\tr(\rho_1(X))=0$ for any $X\in\h_1$ and the following systems hold:
	\begin{equation}\label{S1} \begin{cases}
			\langle\om_1(X,Y),Z\rangle_2=\langle \rho_2(Z)(X),Y\rangle_1+\langle \rho_2(Z)(Y),X\rangle_1,\quad X,Y\in\h_1,Z\in\h_2, \\
			\langle\om_2(X,Y),Z\rangle_1=\langle \rho_1(Z)(X),Y\rangle_2+\langle \rho_1(Z)(Y),X\rangle_2,\quad X,Y\in\h_2,Z\in\h_1.
	\end{cases} \end{equation}

	\begin{equation}\label{S2}  \begin{cases}
			\langle X\circ_2 Y-Y\circ_2 X,Z\rangle_2=
			\langle Y\circ_2 Z,X\rangle_2-\langle  X\circ_2 Z,Y\rangle_2,\\
			\ass_{\circ_2}(X,Y,Z)-\ass_{\circ_2}(Y,X,Z)=
			\left(\langle Y,Z\rangle_2 X-\langle X,Z\rangle_2 Y \right),\\
			B_2(X\circ_2 Y)=B_2(X)\circ_2Y+X\circ_2B_2(Y),\\
			\langle B_2X,Y\rangle_2=-\langle B_2Y,X\rangle_2,\tr(\Li_X^{\circ_2})=-\tr(\rho_2(X)),\quad X,Y,Z\in\h_2.
	\end{cases} \end{equation}
	
	\begin{equation}\label{S3}  \begin{cases}
			\rho_1(X)(Y\circ_2 Z)=Y\circ_2\rho_1(X)(Z)
			+\rho_1(X)(Y)\circ_2  Z-\rho_1(\rho_2(Y)(X))(Z)-\om_1(X,\om_2(Y,Z)),\quad X\in\h_1,Y,Z\in\h_2,\\
			\rho_2(\rho_1(Y)(X))(Z)+\om_2(X,\om_1(Y,Z))=0,\;X\in\h_2,Y,Z\in\h_1\\
			\rho_1(X)(\om_1(Y,Z))-\rho_1(Y)(\om_1(X,Z)) =0,\; X,Y,Z\in\h_1,\\
			\rho_2(X)(\om_2(Y,Z))-\rho_2(Y)(\om_2(X,Z)) +\om_2(X,Y\circ_2 Z)-\om_2(Y,X\circ_2 Z)-{\om_2([X,Y], Z)}=0,\;X,Y,Z\in\h_2,\\	
			\om_2(\rho_1(X)(Y),Z)+\om_2(Y,\rho_1(X)(Z))=0,\; X\in\h_1,Y,Z\in\h_2,\\
			X\circ_2(\om_1(Y,Z))=\om_1(\rho_2(X)(Y),Z)+
			\om_1(Y,\rho_2(X)(Z))-\langle Y,Z\rangle_1 X,\;X\in\h_2,Y,Z\in\h_1,\\
			[B_2,\rho_1(X)]=\rho_1(B_1(X))+\frac12\rho_1(X),\; X\in\h_1,\\
			[B_1,\rho_2(X)]=\rho_2(B_2(X)),\; X\in\h_2.	\end{cases} \end{equation}

\end{pr}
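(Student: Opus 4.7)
The plan is to expand each identity of \eqref{AS} using the decomposition \eqref{match} and project both sides onto the orthogonal direct sum $\h = \h_1 \oplus \h_2$. Because both sides are multilinear in $X, Y, Z$, this reduces the problem to checking, for every placement of $X, Y, Z$ into $\h_1$ or $\h_2$, one identity between elements of $\h_1$ and one between elements of $\h_2$. Since the projections are tautological, both directions of the equivalence are established simultaneously, and it suffices to verify that the complete family of projected identities coincides with the stated system.

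I would process the identities of \eqref{AS} in the following order. First, the Hessian identity (line one) yields, in the pure cases $X, Y, Z \in \h_i$, the first equation of \eqref{S2} and the Hessian property for $(\h_1, \circ_1, \prs_1)$; in the mixed cases it yields the two formulas in \eqref{S1} defining $\om_1$ and $\om_2$ from $\rho_2$ and $\rho_1$. Second, the identity $S([X,Y]) = X \circ SY - Y \circ SX$ (line three) forces $\om_1$ and $\om_2$ to be symmetric. Third, the associator identity (line two) splits into eight cases according to the placement of $X, Y, Z$: the pure $\h_2$ case gives the second equation of \eqref{S2}; the pure $\h_1$ case, together with $S|_{\h_1} = 0$, shows that $(\h_1, \circ_1)$ is itself left-symmetric; and the mixed cases, after using the symmetry of $\om_i$, produce the representation property of $\rho_1$ and $\rho_2$ together with the intertwining identities of \eqref{S3} involving $\rho_i, \circ_2, \om_i$. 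Fourth, the derivation identity $A(X \circ Y) = AX \circ Y + X \circ AY - SX \circ Y$ (line four) supplies the $B_i$ structure: its $\h_1$ projection with $X, Y \in \h_1$ is precisely the derivation hypothesis of Lemma \ref{useful1} for $(\h_1, \circ_1, \prs_1, B_1)$; its $\h_2$ projection with $X, Y \in \h_2$ gives the $B_2$-derivation identity for $\circ_2$ in \eqref{S2}; and the mixed projections give the two intertwining identities $[B_2, \rho_1(X)] = \rho_1(B_1 X) + \tfrac12 \rho_1(X)$ and $[B_1, \rho_2(X)] = \rho_2(B_2 X)$ of \eqref{S3}. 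Finally, the trace condition $\tr(\Li_X^\circ) = 0$ splits into a trace identity on each component, producing the last equation of \eqref{S2} and the condition $\tr(\rho_1(X)) = 0$.

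The decisive step is the application of Lemma \ref{useful1} to conclude $\circ_1 = 0$. Once the pure $\h_1$ projections have been assembled, the quadruple $(\h_1, \circ_1, \prs_1, B_1)$ satisfies all the hypotheses of Lemma \ref{useful1}: $(\h_1, \circ_1, \prs_1)$ is a Hessian algebra, $B_1$ is skew-symmetric by Lemma \ref{useful}, and the required derivation identity $B_1(X \circ_1 Y) = B_1(X) \circ_1 Y + X \circ_1 B_1(Y) + \tfrac12 X \circ_1 Y$ has just been derived. The lemma then forces $\circ_1 = 0$, which cleans up several remaining identities and brings them to the exact form stated in \eqref{S1}, \eqref{S2}, \eqref{S3}.

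The main obstacle is organizational rather than conceptual: the associator identity alone produces eight projected equations, several of which become redundant or collapse thanks to the symmetry of $\om_1, \om_2$ and, after Lemma \ref{useful1}, to $\circ_1 = 0$. Keeping track of which projection contributes to which stated identity in \eqref{S1}--\eqref{S3}, without duplicating or losing any constraint, requires careful bookkeeping, and for this reason the detailed verification is deferred to the Appendix (Section \ref{section7}).
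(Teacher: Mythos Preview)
Your proposal is correct and follows essentially the same route as the paper's own proof in the Appendix: expand each line of \eqref{AS} through the decomposition \eqref{match}, separate the $\h_1$- and $\h_2$-components case by case, and then invoke Lemma~\ref{useful1} on the resulting Hessian triple $(\h_1,\circ_1,\prs_1,B_1)$ to force $\circ_1=0$. The only item you leave implicit that the paper makes explicit is the redundancy check at the very end: the derivation identity also produces the two relations $B_2(\om_1(X,Y))=\om_1(B_1X,Y)+\om_1(X,B_1Y)$ and $B_1(\om_2(X,Y))=\om_2(B_2X,Y)+\om_2(X,B_2Y)+\tfrac12\om_2(X,Y)$, which do not appear in \eqref{S1}--\eqref{S3} and must be shown to follow from the listed identities (via \eqref{S1} and the last two lines of \eqref{S3}) for the ``if'' direction to go through; your remark that ``several become redundant'' covers this in spirit.
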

\begin{proof}
	See the Appendix.
\end{proof}
In conclusion, we have shown the following result which is our main result and gives a complete description of $\mathrm{LSPK}$.
\begin{theo}\label{theo} Let $(\h_1,\prs_1)$ be a Euclidean vector space, $(\h_2,\circ_2,\prs_2)$ a Euclidean algebra, $B_i$ is skew-symmetric endomorphism of $\h_i$, $i=1,2$, $\rho_1:\h_1\too\mathrm{End}(\h_2)$, $\rho_2:\h_2\too\mathrm{End}(\h_1)$ such that:
	\begin{enumerate}
		\item[$(i)$]  $\tr(\rho_1(X))=0$ and $[\rho_1(X),\rho_1(Y)]=0$, 
		\item[$(ii)$] $\rho_2(X\circ_2 Y-Y\circ_2 X)=
		[\rho_2(X),\rho_2(Y)]$ for any $X,Y\in\h_2$,
		\item[$(iii)$] $\om_1$ and $\om_2$ are defined by \eqref{S1} and \eqref{S2}-\eqref{S3} hold.
	\end{enumerate}	
	Then $\G=\h_1\oplus\h_2\oplus\R H$ endowed with the product $\bullet$ given by 
	\begin{equation*}\label{matchbis}
		X\bullet Y=\begin{cases}
			\om_1(X,Y)+\langle X,Y\rangle_1 H,\quad X,Y\in\h_1,\\
			\om_2(X,Y)+X\circ_2 Y+\langle X,Y\rangle_2 H,\quad X,Y\in\h_2,\\
			\rho_1(X)(Y),\quad X\in\h_1,Y\in\h_2,\\
			\rho_2(X)(Y),\quad X\in\h_2,Y\in\h_1.
		\end{cases}\begin{cases}
			H\bullet X=B_1(X)+\frac12 X,\; X\bullet H=0,\; X\in\h_1,\\
			H\bullet X=B_2(X)+ X,\; X\bullet H=X,\; X\in\h_2,\\
			H\bullet H=H,`
	\end{cases}\end{equation*} is a $\mathrm{LSPK}$ and the Koszul form $\mathrm{B}$ is given by $\mathrm{B}(\h_1,\h_2)=\mathrm{B}(H,\h_1)=\mathrm{B}(H,\h_2)=0,$
	\[  \mathrm{B}(X,Y)=\rho\langle X,Y\rangle_i,\quad X,Y\in\h_i,\; i=1,2\esp \mathrm{B}(H,H)=\rho,
	\]where $\rho=\left(\frac12\dim\h_1+\dim\h_2+1\right)$.  Moreover, all $\mathrm{LSPK}$ are obtained in this way.
\end{theo}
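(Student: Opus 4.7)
The plan is to observe that Theorem \ref{theo} is essentially a repackaging of the results already established in this section, and to handle both directions with minimal additional computation.

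For the necessity direction (every $\mathrm{LSPK}$ arises from such data), I would unpack the preceding machinery in order: Proposition \ref{hp} produces the distinguished vector $H$ with $H\bullet H=H$ and the splitting $\G=\R H\oplus\h$; Proposition \ref{h} equips $\h$ with operators $A,S$ satisfying the hypotheses of Lemma \ref{useful}, which yields the orthogonal splitting $\h=\h_1\oplus\h_2$ with $S_{|\h_1}=0$, $S_{|\h_2}=\mathrm{Id}_{\h_2}$, $A_{|\h_1}=B_1+\tfrac12\mathrm{Id}_{\h_1}$, and $A_{|\h_2}=B_2+\mathrm{Id}_{\h_2}$ for skew-symmetric $B_1,B_2$; the decomposition \eqref{match} defines $\om_1,\om_2,\circ_1,\circ_2,\rho_1,\rho_2$; Lemma \ref{useful1} applied to $(\h_1,\circ_1,\prs_1,B_1)$ forces $\circ_1=0$; and Proposition \ref{mainpr} encodes the left-symmetric axiom as exactly conditions (i)--(iii) of the theorem.

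For the sufficiency direction, I would begin from abstract data satisfying (i)--(iii), define $\G=\h_1\oplus\h_2\oplus\R H$ with the product $\bullet$ as displayed in the statement, and verify left symmetry of $\bullet$ together with the stated formula and positive-definiteness of its Koszul form. Left symmetry is checked by evaluating $\ass(X,Y,Z)-\ass(Y,X,Z)$ on all combinations of arguments from $\h_1$, $\h_2$, $\R H$; this is precisely the reverse of the expansion carried out in the Appendix for Proposition \ref{mainpr}, so each identity in \eqref{S1}--\eqref{S3} together with conditions (i)--(ii) accounts for one family of associator identities, and since those implications are in fact equivalences, no new computation is required.

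For the Koszul form I would compute $\Li_X$ explicitly on each summand. The operator $\Li_H$ acts as $B_1+\tfrac12\mathrm{Id}_{\h_1}$ on $\h_1$, as $B_2+\mathrm{Id}_{\h_2}$ on $\h_2$, and as the identity on $\R H$, so skew-symmetry of $B_1,B_2$ gives $\tr(\Li_H)=\tfrac12\dim\h_1+\dim\h_2+1=\rho$. For $X\in\h_1$ the only diagonal contribution comes from $\rho_1(X)$ on $\h_2$, hence $\tr(\Li_X)=\tr(\rho_1(X))=0$. For $X\in\h_2$, one finds $\tr(\Li_X)=\tr(\rho_2(X))+\tr(\Li_X^{\circ_2})=0$ by the last relation of \eqref{S2}. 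Expanding $\mathrm{B}(X,Y)=\tr(\Li_{X\bullet Y})$ on each pair of subspaces then yields the displayed formulas, and since $\rho>0$ and the three summands are mutually $\mathrm{B}$-orthogonal, $\mathrm{B}$ is positive-definite. The main obstacle is the left-symmetry case analysis, with nine classes of ordered triples (reducible by the $X\leftrightarrow Y$ symmetry) and several lengthy expansions; but since this is exactly the work already performed in the Appendix for Proposition \ref{mainpr} and the implications there are equivalences, the proof reduces to invoking that proposition together with the trace computation above.
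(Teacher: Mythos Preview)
Your proposal is correct and matches the paper's approach: the paper presents Theorem \ref{theo} explicitly as the conclusion of the section (``In conclusion, we have shown the following result\ldots''), with necessity given by the chain Proposition \ref{hp} $\to$ Proposition \ref{h} $\to$ Lemma \ref{useful} $\to$ Lemma \ref{useful1} $\to$ Proposition \ref{mainpr}, and sufficiency left implicit in the if-and-only-if of Proposition \ref{mainpr} and the reversibility of the associator expansions in the Appendix. Your explicit trace computation for the Koszul form fills in the one step the paper does not spell out, and is correct.
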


\begin{remark} When $\rho_1=0$, $\om_2=0$ and \eqref{S2} and \eqref{S3} reduce to
\[ X\circ_2(\om_1(Y,Z))=\om_1(\rho_2(X)(Y),Z)+
\om_1(Y,\rho_2(X)(Z))-\langle Y,Z\rangle_1 X\esp 
[B_1,\rho_2(X)]=\rho_2(B_2(X)),\;\;X\in\h_2,Y,Z\in\h_1. \]	
\end{remark}

This theorem has an important corollary.

\begin{co}\label{ex}\begin{enumerate}
		\item Let $(\h\prs)$ be a Euclidean vector space of dimension $n$  and $D$ a skew-symmetric endomorphism on $\h$. 
		Then $\G=\h\oplus\R H$ endowed with $\bullet$ given by
		\begin{equation}\label{1} X\bullet Y= \langle X,Y\rangle H,\; H\bullet X=\frac12X+D(X),\; X\bullet H=0,\;H\bullet H=H,\; X,Y\in\h, \end{equation}is a $\mathrm{LSPK}$ and its Koszul form 
		$\mathrm{B}$ is given by
		\[ \mathrm{B}(X,Y)=\left(\frac{n}2+1\right)\langle X,Y\rangle,\;\mathrm{B}(X,H)=0,\; \mathrm{B}(H,H)=\frac{n}2+1,\;\; X,Y\in\h. \]
		\item Let $(\h,\circ,\prs)$ be a Euclidean algebra of dimension $n$ such that,  for any $X,Y\in\h$, $\tr(\Li_X)=0$ and
		\[ \begin{cases}
			\langle X\circ Y-Y\circ X,Z\rangle=
			\langle Y\circ Z,X\rangle-\langle  X\circ Z,Y\rangle,\\
			\ass(X,Y,Z)-\ass(Y,X,Z)=
			\left(\langle Y,Z\rangle X-\langle X,Z\rangle Y \right),
		\end{cases} \]
		 and $D$ is a skew-symmetric derivation of $(\h,\circ)$. Then $\G=\h\oplus\R H$ endowed with $\bullet$ given by
		\begin{equation}\label{2} X\bullet Y=X\circ Y+ \langle X,Y\rangle H,\; H\bullet X=X+D(X),\; X\bullet H=X,\;H\bullet H=H,\; X,Y\in\G, \end{equation}is a $\mathrm{LSPK}$ and its Koszul form 
		$\mathrm{B}$ is given by
		\[ \mathrm{B}(X,Y)=(n+1)\langle X,Y\rangle,\;\mathrm{B}(X,H)=0,\; \mathrm{B}(H,H)=n+1,\;\; X,Y\in\h. \]
	\end{enumerate}
	
\end{co}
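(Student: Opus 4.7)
The plan is to obtain both parts of Corollary \ref{ex} as direct specializations of Theorem \ref{theo}, in each case degenerating one of the two summands $\h_1$ or $\h_2$ to zero. Since Theorem \ref{theo} already captures the full structure of an $\mathrm{LSPK}$ in terms of data on $\h_1\oplus\h_2\oplus \R H$, all that remains is to verify that the hypotheses of the theorem collapse to the simpler hypotheses stated in the corollary under each degeneration, and that the product formula of the theorem recovers \eqref{1} and \eqref{2} respectively.

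For part $(1)$, I take $\h_1=\h$, $\h_2=\{0\}$, and set $B_1=D$; the data $\rho_1,\rho_2,\om_1,\om_2,\circ_2,B_2$ are then either zero or vacuous. Hypothesis $(i)$ of Theorem \ref{theo} ($\tr(\rho_1(X))=0$ and $[\rho_1(X),\rho_1(Y)]=0$) is automatic because $\rho_1=0$; hypothesis $(ii)$ concerns only $\h_2$ and is vacuous; and the defining relations \eqref{S1}, \eqref{S2}, \eqref{S3} either vanish identically or reduce to $\om_1=\om_2=0$. Reading off the product formula of Theorem \ref{theo} in this case yields exactly \eqref{1}, and the constant $\rho=\tfrac12\dim\h_1+\dim\h_2+1=\tfrac{n}{2}+1$ reproduces the stated Koszul form.

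For part $(2)$, I take $\h_1=\{0\}$, $\h_2=\h$, $\circ_2=\circ$, $B_2=D$, so that $\rho_1,\rho_2,\om_1,\om_2,B_1$ are zero or vacuous. The three algebraic assumptions on $(\h,\circ,\prs)$ imposed in the corollary (the two relations of \eqref{AS} and $\tr(\Li_X)=0$) are precisely the first, second, and fifth items of \eqref{S2} once $\rho_2=0$ is substituted into the fifth. The remaining two items of \eqref{S2} are the derivation property and skew-symmetry of $D=B_2$, which are built into the hypothesis. Systems \eqref{S1} and \eqref{S3}, together with conditions $(i)$ and $(ii)$ of Theorem \ref{theo}, become tautologies when $\rho_1=\rho_2=\om_1=\om_2=0$. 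Specializing the product formula of Theorem \ref{theo} recovers \eqref{2}, and $\rho=0+n+1=n+1$ gives the stated Koszul form.

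The main (and essentially only) task is the verification that the identities in \eqref{S1}, \eqref{S2}, \eqref{S3} reduce as claimed in each degenerate case; there is no conceptual obstacle, only a careful line-by-line inspection of which terms survive when one of $\h_1,\h_2$ is trivial.
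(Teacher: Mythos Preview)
Your proposal is correct and is exactly the intended derivation: the paper states Corollary \ref{ex} immediately after Theorem \ref{theo} without proof, and the two items are precisely the degenerate cases $\h_2=\{0\}$ and $\h_1=\{0\}$ of that theorem, with the constant $\rho$ specializing to $\tfrac{n}{2}+1$ and $n+1$ respectively. Your line-by-line verification that the systems \eqref{S1}--\eqref{S3} collapse to the stated hypotheses is the only content needed, and it is accurate.
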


\section{A new class of non-associative algebras: $k$-Hessian algebras}\label{section4}

Theorem \ref{theo}  shows that all $\mathrm{LSPK}$ can be constructed from a class of algebras that, to our knowledge, is new and of independent interest.

A $k$-Hessian algebra is an algebra $(\h, \circ)$ equipped with a scalar product $\langle \cdot, \cdot \rangle$, such that for any $X, Y, Z \in \h$:
\begin{equation}\label{kh}
	\begin{cases} 
		\langle X \circ Y - Y \circ X, Z \rangle = \langle Y \circ Z, X \rangle - \langle X \circ Z, Y \rangle, \\
		\operatorname{ass}(X, Y, Z) - \operatorname{ass}(Y, X, Z) = k\left( \langle X, Z \rangle Y - \langle Y, Z \rangle X \right).
	\end{cases}
\end{equation}

It is important to note that a $k$-Hessian algebra is Lie-admissible, meaning the bracket $[X, Y] = X \circ Y - Y \circ X$ satisfies the properties of a Lie bracket. Furthermore, if $G$ is a connected Lie group with Lie algebra $(\G, \br)$, the pair $(\prs, \circ)$ defines a left-invariant metric $h$ on $G$ and a torsion-free connection $\nabla$, such that $(h, \nabla)$ satisfies the Codazzi equation \eqref{codazzi} and the curvature $R^\na$ of $\na$ satisfies
\[
R^\nabla(X, Y) = k X \wedge Y.
\]
We refer to the structure $(G, h, \nabla)$ as a $k$-Hessian Lie group, establishing a correspondence between $k$-Hessian Lie algebras and $k$-Hessian Lie groups. In particular, if $\nabla$ is the Levi-Civita connection of $(G, h)$, we obtain an important subclass of $k$-Hessian algebras: the Lie algebras associated with Lie groups that have a left-invariant Riemannian metric of constant sectional curvature $k$.

For $k > 0$, the only connected and simply connected Lie group carrying a left-invariant metric with constant sectional curvature $k$ is $SU(2)$ (see \cite{wallach}). 

For $k < 0$, Milnor in \cite{Milnor} provided a class of Euclidean Lie algebras with constant sectional curvature $k$. Indeed, let $(\h,  \prs)$ be  Euclidean vector space and $\bold{h}\in\h\setminus\{0\}$. The bracket on $\h$   defined by:
\[
[X, Y] = (X \wedge Y)(\mathbf{h}) 
\]is a Lie bracket and the Levi-Civita product of $(\G,\br,\prs)$ is given by
\[ X \circ Y = \langle X, Y \rangle \mathbf{h} - \langle Y, \mathbf{h} \rangle X,\;\quad X,Y\in\h. \]Furthermore, the associator satisfies the relation: 
\[ \ass(X,Y,Z)-\ass(X,Y,Z)=-|\mathbf{h}|^2 (X\wedge Y)(Z),\quad X,Y,Z\in\h. \]
Thus $(\h,\circ,\prs)$ is $-|\mathbf{h}|^2$-Hessian algebra. We refer to $(\h,\circ,\prs)$ as a Milnor algebra.
Note that $\Li_{\mathbf{h}}=0$ and, in fact, a $k$-Hessian algebra with $k<0$ and having a non zero vector $u$ satisfying $\Li_u=0$ is a Milnor algebra.

\begin{theo}\label{milnor}
	Let $(\h,\circ,\prs)$ be a $k$-Hessian algebra such that $k<0$ and there exists $u\not=0$ such that $\Li_u=0$. Then there exists $\bold{h}=\mu u$ such that, for any $X,Y\in\h$,
	\[ X\circ Y=\langle X,Y\rangle \bold{h}-\langle \bold{h},Y\rangle X. \]
	
\end{theo}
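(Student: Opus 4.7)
The plan is to use the hypothesis $\Li_u=0$ to extract the whole product $\circ$ from the operator $T:=\Ri_u$ defined by $T(X)=X\circ u$. Setting $Y=u$ in the first equation of \eqref{kh} gives $\langle T(X),Z\rangle = -\langle X\circ Z,u\rangle$, and taking $Z=u$ shows $T(\h)\subset u^\perp$. I then plug the triples $(X,u,u),(u,X,u)$ and $(X,u,Y),(u,X,Y)$ into the second equation of \eqref{kh}; all associators containing a factor $u\circ\,\cdot$ vanish, and what remains is
\[
T^{2}(X) = k\langle X,u\rangle u - k|u|^{2}X,\qquad T(X)\circ Y = k\langle X,Y\rangle u - k\langle u,Y\rangle X.
\]
Since $k<0$ and $u\neq 0$, the restriction $S:=T|_{u^\perp}$ satisfies $S^{2}=-k|u|^{2}\mathrm{Id}$, hence is invertible with $S^{-1}=-\tfrac{1}{k|u|^{2}}S$.

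Writing any $Z\in u^\perp$ as $T(X)$ with $X=-\tfrac{1}{k|u|^{2}}T(Z)$ in the fundamental identity above yields the explicit formula
\[
Z\circ Y = \tfrac{1}{|u|^{2}}\bigl(\langle u,Y\rangle T(Z) - \langle T(Z),Y\rangle u\bigr),\qquad Z\in u^\perp,\; Y\in\h.
\]
Substituting this into the second equation of \eqref{kh} applied to $X,Y,Z\in u^\perp$, and using that $X\circ Y\in\R u$ together with $\Li_u=0$ to kill the products $(X\circ Y)\circ Z$ and $(Y\circ X)\circ Z$, leaves the wedge-type constraint
\[
\langle S(X),Z\rangle S(Y) - \langle S(Y),Z\rangle S(X) = -k|u|^{2}\bigl(\langle X,Z\rangle Y - \langle Y,Z\rangle X\bigr),\qquad X,Y,Z\in u^\perp.
\]

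The hard part will be deducing, from this constraint together with $S^{2}=-k|u|^{2}\mathrm{Id}$, that $S$ is a scalar multiple of the identity. For $\dim u^\perp\geq 3$, I would pick orthonormal $X,Y$ and take $Z=X$ to see that the right-hand side equals $-k|u|^{2}Y\neq 0$, which forces $S(X)$ and $S(Y)$ to be linearly independent; taking then $Z$ orthogonal to both $X$ and $Y$ makes the right-hand side vanish and yields $\langle S(X),Z\rangle=\langle S(Y),Z\rangle=0$, whence $S(X)\in\mathrm{span}(X,Y)$. Varying $Y$ among unit vectors perpendicular to $X$ collapses the intersection of these $2$-planes to $\mathrm{span}(X)$, so $S(X)=\lambda(X)X$; a standard linearity argument then gives $S=\lambda\,\mathrm{Id}$. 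The low-dimensional cases $\dim u^\perp\in\{0,1,2\}$ are immediate: only dimension $2$ requires work, where the constraint forces $\det S=-k|u|^{2}$, which combined with $S^{2}=-k|u|^{2}\mathrm{Id}$ via Cayley--Hamilton rules out $\mathrm{tr}\,S=0$ and yields $S$ scalar. In all cases $\lambda^{2}=-k|u|^{2}$; setting $\mathbf{h}:=-\tfrac{\lambda}{|u|^{2}}u$, a direct block-by-block verification on $\R u\times\h$, $u^\perp\times\R u$ and $u^\perp\times u^\perp$ using the formulas derived above shows that $X\circ Y = \langle X,Y\rangle\mathbf{h} - \langle\mathbf{h},Y\rangle X$, completing the proof.
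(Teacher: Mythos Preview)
Your proof is correct, and the overall strategy coincides with the paper's: both arguments exploit $\Li_u=0$ in the second relation of \eqref{kh} to obtain the key identity $\Li_{X\circ u}=k\,X\wedge u$ (your formula $T(X)\circ Y=k\langle X,Y\rangle u-k\langle u,Y\rangle X$), and both reduce the problem to showing that the invertible operator $S=\Ri_u|_{u^\perp}$ is a scalar multiple of the identity. The execution of this last step differs, however. The paper works in an orthonormal basis $(e_i)$ of $u^\perp$, sets $w_i=e_i\circ u$, obtains the identity $w_i\wedge w_j=-k\,e_i\wedge e_j$ from a commutator computation $[\Li_{w_i},\Li_{w_j}]$, and then finishes by a $2\times 2$ block analysis of the coefficients $\langle w_i,e_j\rangle$. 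You instead plug the explicit product formula back into the associator relation on $u^\perp$ to get the coordinate-free constraint $S(X)\wedge S(Y)=-k|u|^2\,X\wedge Y$, and conclude by an intersection-of-planes argument (in dimension $\geq 3$) and Cayley--Hamilton (in dimension $2$). Your route is slightly more streamlined and avoids the basis-dependent case analysis; the paper's route makes the skew-symmetry of every $\Li_X$ explicit along the way, which is conceptually nice but not logically needed for the conclusion.
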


\begin{proof} Denote by $\br$ the Lie bracket associated to $\circ$. We have, for any $X,Y\in\h$,
	\[ \Li_{[X,Y]}-[\Li_X,\Li_Y]= k X\wedge Y \]where $X\wedge Y$ is the skew-symmetric endomorphism given by
	\[ (X\wedge Y)(Z)=\langle X,Z\rangle Y-\langle Y,Z\rangle X. \]
	We can suppose that $|u|=1$. Since $\Li_u=0$, for any $X\in\h$,
	\begin{equation}\label{U} \Li_{X\circ u}= k X\wedge u. \end{equation}
	This relation implies that  $\ker\Ri_u=\R u$ and  $\h=\R u\oplus \mathrm{Im}\Ri_u$. So,  for any $X\in\h$, $\Li_X$ is skew-symmetric. We deduce that $u^\perp=\mathrm{Im}\Ri_u:=\h_1$.
	
	If $\dim\h=2$, choose a unit vector $e$ such that $\langle u,e\rangle=0$. We have $e\circ u=\la e$ and $e\circ e=-\la  u$. So
	\begin{align*}
		\Li_{e\circ u}e=	\la e\circ e=k (e\wedge u)(e)=ku
	\end{align*}and hence
	$\la^2=-k.$
	We can choose $u$ such that
	$e\circ u=-\sqrt{|k|} e$ and the vector $\bold{h}= \sqrt{|k|} u$ satisfies the desired relation.

	Suppose $\dim\h\geq3$ and choose $(e_2,\ldots,e_n)$  an orthonormal basis of $\h_1$ and $(w_2,\ldots,w_n)$ its image by $\Ri_u$, i.e., $w_i=e_i\circ u$. According to \eqref{U}, for any $i,j\in\{2,\ldots,n\}$,
	\begin{align}
		w_i\circ u&=\Li_{w_i}u=k (e_i\wedge u)(u)=-ke_i,\nonumber\\\label{20}
		e_i\circ e_j&=-\frac1k \Li_{w_i\circ u}(e_j)=-\langle w_i,e_j\rangle u,\\ w_i\circ w_j&=k(e_i\wedge u)(w_j)=k\langle e_i,w_j\rangle u.\nonumber
	\end{align}
	Now, it is easy to check that for any skew-symmetric endomorphism $A$ and for any $X,Y$,
	\[ [A,X\wedge Y]=(AX)\wedge Y+X\wedge AY. \]
	By using this relation, we get for any $i\not=j$,
	\begin{align*}
		k w_i\wedge w_j&=-[\Li_{w_i},\Li_{w_j}]
		=-k[\Li_{w_i}, e_j\wedge u]
		=-k\Li_{w_i}(e_j)\wedge u-ke_j\wedge \Li_{w_i}(u)
		=k^2 e_j\wedge e_i.
	\end{align*}So $w_i\wedge w_j=-ke_i\wedge e_j$. This relation implies that,  for any $l\notin\{i,j\}$,  
	\[ \langle w_i,e_l\rangle w_j- \langle w_j,e_l\rangle w_i=0 \]	and hence
	\[ \langle w_i,e_l\rangle=\langle w_j,e_l\rangle=0. \]Fix $i\not=j$. We have
	\[ w_i=a e_i+b e_j \esp w_j=c e_i+d e_j.\]
	Put $a=\langle w_i,e_i\rangle$, $b=\langle w_i,e_j\rangle$, $c=\langle w_j,e_i\rangle$ and $d=\langle w_j,e_j\rangle$. Note first that the relation $w_i\wedge w_j=-ke_i\wedge e_j$ implies
	\[ ad-bc=-k. \]Moreover, according to \eqref{20},
	\[ e_i\circ e_i=-au,\; e_i\circ e_j=-bu,\;e_j\circ e_i=-cu\esp e_j\circ e_j=-du. \]We get
	\begin{align*}
		\Li_{w_i}e_i&=-(a^2+bc)u=k(e_i\wedge u)(e_i)=ku,\\
		\Li_{w_i}e_j&=-(ab+bd)u=k(e_i\wedge u)(e_j)=0,\\
		\Li_{w_j}e_i&=-(ac+dc)u=k(e_j\wedge u)(e_i)=0,\\
		\Li_{w_j}e_j&=-(cb+d^2)u=k(e_i\wedge u)(e_i)=ku.
	\end{align*}Thus
	\[ a^2+bc=cb+d^2=ad-bc=-k,b(a+d)=c(a+d)=0. \]
	If $a+d=0$ then $cb+d^2=-d^2-bc=-k$ which is impossible so $b=c=0$ and $a^2=d^2=-k$. By replacing $u$ by $-u$ if necessary, we get
	\[ e_i\wedge e_i=e_j\circ e_j=-\sqrt{|k|}u,\; e_i\circ e_j=e_j\circ e_i=0\esp e_i\circ u= \sqrt{|k|}e_i. \]By taking $\bold{h}=-\sqrt{|k|} u$ we get the desired result.
\end{proof}

Another important class of $k$-Hessian algebras is formed by $k$-Hessian commutative algebras. Let $(\h,\circ,\prs)$ be a $k$-Hessian algebra such that $\circ$ is commutative. Then the first relation in \eqref{kh} equivalent to $\Li_X$ being symmetric for any $X\in\h$ and the second relations is equivalent to
\[ [\Li_X,\Li_Y]=-k X\wedge Y,\; X,Y\in\h. \]
In dimension 2, a $k$-Hessian algebra such that, for any $X$,  $\Li_X=0$ is either a Milnor algebra or a commutative $k$-Hessian algebra.
\begin{pr}\label{kdim2} Let $(\h,\circ,\prs)$ be a $k$-Hessian algebra such that, for any $X\in\h$, $\tr(\Li_X)=0$. Then $k\leq0$ and if $k=0$ then $\circ$ is trivial. If $k<0$ then there exists an orthonormal basis $(e_1,e_2)$ of $\h$ such that one of the following situations occurs:
	\[ \begin{cases}
		e_1\circ e_1=-\sqrt{|k|}\cos(\theta)e_2,\\
		e_1\circ e_2=\sqrt{|k|}\cos(\theta)e_1,\\
		e_2\circ e_1=-\sqrt{|k|}\sin(\theta)e_2,\\
		e_2\circ e_2=\sqrt{|k|}\sin(\theta) e_1,
	\end{cases}\quad\mbox{or}\quad\begin{cases}
		e_1\circ e_1=- y e_1+be_2,\\
		e_1\circ e_2=be_1+ y e_2,\\
		e_2\circ e_1= b e_1+ye_2,\\
		e_2\circ e_2=y e_1-b e_2,\\
		b=\frac{\sqrt{|k|}}{\sqrt{2}}\cos(\theta),
		y=\frac{\sqrt{|k|}}{\sqrt{2}}\sin(\theta).
	\end{cases} \]
\end{pr}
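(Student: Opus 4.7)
The plan is a direct coordinate computation. First I fix an orthonormal basis $(e_1,e_2)$ of $\h$ and write $\Li_{e_1},\Li_{e_2}$ as $2\times 2$ matrices. The trace hypothesis makes each $\Li_{e_i}$ traceless, and applying the first line of \eqref{kh} with $(X,Y,Z)=(e_1,e_2,e_1)$ and $(e_1,e_2,e_2)$ gives two further linear relations that reduce the data to four free parameters $p,q,r,s\in\R$:
\[
\Li_{e_1}=\begin{pmatrix} p & 2r-q\\ q & -p\end{pmatrix},\qquad \Li_{e_2}=\begin{pmatrix} r & -2p-s\\ s & -r\end{pmatrix}.
\]

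Next I rewrite the second line of \eqref{kh} in operator form: since $\ass(X,Y,Z)-\ass(Y,X,Z)=\Li_{[X,Y]}(Z)-[\Li_X,\Li_Y](Z)$, the condition becomes $[\Li_{e_1},\Li_{e_2}]-\Li_{[e_1,e_2]}=-k(e_1\wedge e_2)$, where $[e_1,e_2]=(r-q)e_1-(p+s)e_2$. Expanding the four matrix entries of this identity produces three independent scalar conditions: the diagonal entries collapse to $pq+rs=0$, while the off-diagonal entries give a pair of quadratic relations. The key algebraic step will be to add the two off-diagonal relations; all mixed terms cancel, leaving $qr-ps=p^2+r^2$. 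Substituting this back into either off-diagonal relation yields the clean norm identity
\[
k=-(p^2+q^2+r^2+s^2),
\]
which proves $k\leq 0$ and shows that $k=0$ forces $p=q=r=s=0$, hence $\circ=0$.

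Assume now $k<0$. The constraints $pq+rs=0$ and $qr-ps=p^2+r^2$ form a linear system in $(q,s)$ with coefficient matrix $\begin{pmatrix}p&r\\ r&-p\end{pmatrix}$ of determinant $-(p^2+r^2)$, so I split into two cases. If $p=r=0$, the system is vacuous and $(q,s)$ ranges on the circle $q^2+s^2=|k|$; the parameterization $q=-\sqrt{|k|}\cos\theta$, $s=-\sqrt{|k|}\sin\theta$ then reproduces the first normal form, in which both $\Li_{e_i}$ are skew-symmetric. Otherwise $p^2+r^2\neq 0$, Cramer's rule gives the unique solution $q=r$, $s=-p$, and both $\Li_{e_i}$ become symmetric (so $\circ$ is automatically commutative); the relation $p^2+r^2=|k|/2$ then allows the parameterization $b=r=\frac{\sqrt{|k|}}{\sqrt 2}\cos\theta$, $y=-p=\frac{\sqrt{|k|}}{\sqrt 2}\sin\theta$, recovering the second normal form.

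The main obstacle is purely computational: expanding the matrix commutator $[\Li_{e_1},\Li_{e_2}]$ and the matrix $\Li_{[e_1,e_2]}$ and comparing entries without sign errors. What makes the proof work is the observation that the sum of the two off-diagonal equations collapses to the pure norm identity $k=-(p^2+q^2+r^2+s^2)$; after that the classification reduces to an elementary $2\times 2$ linear-algebra dichotomy.
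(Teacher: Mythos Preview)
Your proposal is correct and follows essentially the same route as the paper's proof: a direct coordinate computation in an orthonormal basis, reduction to four free parameters via the trace and Hessian conditions, derivation of the norm identity $-k=$ sum of squares from the associator relation, and a two-case split. The only cosmetic difference is that the paper retains the parameters $(a,b,x,y)$ (the top-row entries) and organizes the split via a scalar $\mu\in\{0,1\}$ coming from $ab+xy=0$, whereas you retain $(p,q,r,s)$ and phrase the same dichotomy as the rank of a $2\times2$ linear system.
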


\begin{proof}
	We choose an orthonormal basis $(e_1,e_2)$ of $\h$ and put
	\[ \Li_{e_1}=\left(\begin{matrix}
		a&b\\c&-a
	\end{matrix}  \right)\esp \Li_{e_2}=\left(\begin{matrix}
		x&y\\z&-x
	\end{matrix}  \right).  \] The metric is Hessian if and only if $c=2x-b$ and $z=-(2a+y)$. Now the relation
	\[ \ass(X,Y,Z)-\ass(Y,X,Z)=k\left(\langle X,Z\rangle Y- \langle Y,Z\rangle X \right) \]is equivalent to
	\[ \begin{cases}
		ab+xy=0,\\
		bx-ay +b^2+ y^2 + k=0,\\
		6a^2 +5ay - 5bx + 6x^2 +b^2+ y^2 +k=0.
	\end{cases} \]
	If we take the difference between the third relation and the second one we get
	\[ 6(a^2+ay-bx+x^2)=0. \]But $ay-bx=b^2+ y^2 + k$ and hence
	\[ a^2+x^2+b^2+y^2=-k. \]
	Then $k\leq 0$ and if $k=0$, $a=x=b=y=0$ and $\circ=0$.
	
	Suppose that $k<0$. Then $(b,y)\not=(0,0)$ and from the relation $ab+xy=0$, we deduce that there exists $\mu$ such that $x=\mu b$ and $a=-\mu y$. So
	\[ \begin{cases}
		\mu b^2+\mu y^2 +b^2+ y^2 + k=0,\\
		6\mu^2y^2 -5\mu y^2 - 5\mu b^2 + 6\mu^2b^2 +b^2+ y^2 +k=0.
	\end{cases} \]Thus
	\[ (1+\mu)(b^2+y^2)=-k\esp (\mu^2-\mu)(y^2+b^2)=0. \]
	So $\mu\in\{0,1\}$, $b=\frac{\sqrt{|k|}}{\sqrt{1+\mu}}\cos\theta$ and
	$y=\frac{\sqrt{|k|}}{\sqrt{1+\mu}}\sin\theta$. 
\end{proof}

If we drop the condition $\tr(\Li_X)=0$, we can find examples of $k$-Hessian algebras with $k>0$ as illustrated by the following example. We give also an example of $-1$-Hessian commutative algebra of dimension 3.

\begin{exem}\begin{enumerate}
		\item 
		Consider $\R^2$ endowed with the metric and the product $\circ$ given by
		\[ \prs=\left[\begin{array}{cc}
			\lambda  & 0 
			\\
			0 & \mu  
		\end{array}\right],\;\begin{cases} e_1\circ e_1=\frac{(y+2)\la}{2\mu} e_2,e_2\circ e_2=ye_2,\\
			e_1\circ e_2=(\frac{y}{2}-1)e_1,\;e_2\circ e_1=\frac{y}{2}e_1,\;\mu=\frac1k(\frac14y^2-1),\quad k|y|>2.
		\end{cases}
		\]
		Then $(\R^2,\circ,\prs)$ is a $k$-Hessian algebra.
		\item Consider $\R^3$ endowed with its canonical scalar product and the product $\circ$ given by
		\[ e_1\circ e_2=e_2\circ e_1=e_3, e_1\circ e_3=e_3\circ e_1=e_2,e_1\circ e_3=e_3\circ e_1=e_2. \]
		Then $(\R^3,\circ,\prs)$ is a $-1$-Hessian commutative algebra.

	\item	By using  Milnor algebras introduced in Section \ref{section4}, we can build a large class of $\mathrm{LSPK}$.

			Let $(\h,\prs)$ be a Euclidean vector space and $\bold{h}$ is a unit vector. The product
			\[ X\circ Y=\langle X,Y\rangle \bold{h}-\langle Y,\bold{h}\rangle X \]defines on $\h$ a structure of $-1$-Hessian algebra. Moreover, one can see easily that $D$ is a derivation of $(\h,\circ)$ if and only if $D$ is skew-symmetric and $D(\bold{h})=0$. By using Corollary \ref{ex}, we get that $\h\oplus\R H$ endowed with the product $\bullet$ given by \eqref{2} is a $\mathrm{LSPK}$.

	\end{enumerate}
\end{exem}

The discovery of $k$-Hessian algebras, as the infinitesimal counterpart of $k$-Hessian Lie groups, represents an important consequence of our study. This naturally leads to a meaningful generalization of Hessian manifolds, introducing the concept of $k$-Hessian manifolds, as outlined in the introduction.

\section{Left symmetric algebras with positive definite Koszul form of dimension $\leq$ 3 and some examples of dimension 4 and 5 }\label{section6}

In this section, we give all $\mathrm{LSPK}$ of dimension $\leq$ 3 and some examples of dimension 4 and 5.

The classification of 2-dimensional $\mathrm{LSPK}$ follows directly from Theorem \ref{theo}.
\begin{pr} Let $(\G,\bullet)$ be a 2-dimensional $\mathrm{LSPK}$. Then $(\G,\bullet)$ is either isomorphic to $\R^2$ with its canonical associative product or there exists a basis $(e,H)$ of $\G$ such that
		 $e\bullet e= H,\; H\bullet e=\frac12e, H\bullet H=H.$ In this case, the matrix of the Koszul form is $\frac32\mathrm{I}_2$.

\end{pr}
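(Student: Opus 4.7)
The plan is to apply Theorem \ref{theo} to the case $\dim\G=2$. Since the classification decomposes $\G=\h_1\oplus\h_2\oplus\R H$, the equation $\dim\h_1+\dim\h_2=1$ leaves exactly two possibilities, and I will handle each in turn.

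\textbf{Case 1: $\dim\h_1=1$, $\dim\h_2=0$.} In this case $\rho_1$, $\rho_2$, $\om_1$, $\om_2$, $\circ_1$, $\circ_2$ all vanish for dimension reasons, and the skew-symmetric endomorphism $B_1$ of a one-dimensional space is zero. Choosing a unit vector $e\in\h_1$, the formulas of Theorem \ref{theo} collapse to $e\bullet e=\langle e,e\rangle_1 H=H$, $H\bullet e=B_1(e)+\tfrac12 e=\tfrac12 e$, $e\bullet H=0$, and $H\bullet H=H$, giving exactly the second algebra of the proposition. The Koszul form is computed from the theorem's formula with $\rho=\tfrac12\dim\h_1+\dim\h_2+1=\tfrac32$, so its matrix in the basis $(e,H)$ is $\tfrac32\mathrm{I}_2$.

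\textbf{Case 2: $\dim\h_1=0$, $\dim\h_2=1$.} Then $\rho_1$, $\rho_2$, $\om_1$, $\om_2$ vanish, $B_2=0$, and the commutative product $\circ_2$ on a one-dimensional space is determined by a single scalar $\lambda$ with $e\circ_2 e=\lambda e$. The trace condition $\tr(\Li_e^{\circ_2})=-\tr(\rho_2(e))=0$ forces $\lambda=0$, hence $\circ_2=0$. The formulas reduce to $e\bullet e=H$, $H\bullet e=e$, $e\bullet H=e$, $H\bullet H=H$, which is manifestly commutative and associative. To identify this with $\R^2$ equipped with its canonical product, I will exhibit two orthogonal idempotents by solving $u\bullet u=u$: setting $u=\alpha e+\beta H$ and expanding yields $2\alpha\beta=\alpha$ and $\alpha^2+\beta^2=\beta$, whose nontrivial solutions $u_{\pm}=\pm\tfrac12 e+\tfrac12 H$ satisfy $u_++u_-=H$ and $u_+\bullet u_-=0$; thus $(u_+,u_-)$ is a basis realizing the canonical product.

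The argument is essentially a bookkeeping exercise once Theorem \ref{theo} is invoked; there is no serious obstacle, since the one-dimensional constraints kill almost every structure map in the theorem. The only point requiring genuine verification is the identification in Case 2 of the resulting four-parameter multiplication table with the canonical product on $\R^2$, which is handled by the idempotent computation above.
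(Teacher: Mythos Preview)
Your proof is correct and follows exactly the route the paper indicates: the paper simply states that the classification ``follows directly from Theorem \ref{theo}'' without spelling out the two cases, and you have filled in precisely those details. The only minor stylistic difference is that in Case~2 you explicitly build the idempotents $u_\pm$ to identify the algebra with $(\R^2,\cdot)$, whereas the paper's preferred shortcut (visible in the proof of Proposition~\ref{dim3}) is to observe the algebra is commutative and invoke Theorem~\ref{ass}; both arguments are valid and yield the same conclusion.
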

The situation in dimension 3 is more intricate.
\begin{pr}\label{dim3} Let $(\G,\bullet)$ be a 3-dimensional $\mathrm{LSPK}$. Then Then $(\G,\bullet)$ is either isomorphic to $\R^3$ with its canonical associative product or there exists a basis $(e_1,e_2,H)$ of $\G$ such that one of the following cases holds:
	\begin{enumerate}
		
		\item \[ \begin{cases}
			e_1\bullet e_1=e_2\bullet e_2= H,\;e_1\bullet e_2=e_2\bullet e_1=0,\;
			H\bullet e_1=\la e_2+\frac12 e_1,\\H\bullet e_2=-\la e_1+\frac12 e_2,\;
			e_1\bullet H=e_2\bullet H=0, H\bullet H=H,\;\la\in\R.
		\end{cases} \]The matrix of the Koszul form is $2\mathrm{I}_3$.
		\item \[ \begin{cases}
			e_1\bullet e_1=-\cos(\theta)e_2+H,\;
			e_1\bullet e_2=\cos(\theta)e_1,\;
			e_2\bullet e_1=-\sin(\theta)e_2,\\
			e_2\bullet e_2=\sin(\theta) e_1 +H,\;
			H\bullet e_1=e_1\bullet H=e_1, H\bullet e_2=e_2\bullet H=e_2,\; H\bullet H=H.
		\end{cases} \]The matrix of the Koszul form is $3\mathrm{I}_3$.

	\item \[\begin{cases}
		e_1\bullet e_1=-2ae_2+ H,e_1\bullet e_2=0,e_2\bullet e_1=-a e_1,\\ e_2\bullet e_2= a e_2+H,H\circ e_1=\frac12 e_1, H\circ e_2=e_2,\\ e_1\circ H=0, e_2\circ H=e_2, H\circ H=H,\quad a=\pm\frac1{\sqrt{6}}.
	\end{cases}\]The matrix of the Koszul form is $\frac52\mathrm{I}_3$.
	
	\end{enumerate}
	
\end{pr}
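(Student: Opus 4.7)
The plan is to apply Theorem~\ref{theo}: writing $\G=\h_1\oplus\h_2\oplus\R H$, the constraint $\dim\G=3$ forces $\dim\h_1+\dim\h_2=2$, leaving only three possibilities for $(\dim\h_1,\dim\h_2)$, namely $(2,0)$, $(0,2)$ and $(1,1)$, which I would treat in turn. The case $(2,0)$ is handled by Corollary~\ref{ex}(1) with $n=2$: in an orthonormal basis the skew-symmetric datum $D$ takes the form $D(e_1)=\lambda e_2$, $D(e_2)=-\lambda e_1$ for some $\lambda\in\R$, and one reads off case~(1) directly together with the Koszul form $\mathrm{B}=2\mathrm{I}_3$.

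For the case $(0,2)$ I would invoke Corollary~\ref{ex}(2) with $n=2$: the datum on $\h$ is then a $2$-dimensional $(-1)$-Hessian algebra with $\tr(\Li_X)=0$ equipped with a skew-symmetric derivation $D$. Proposition~\ref{kdim2} (with $k=-1$) enumerates exactly two such families. The first is not commutative and matches case~(2) after translating $\circ$ into $\bullet$ via $X\bullet Y=X\circ Y+\langle X,Y\rangle H$; a direct check on the derivation equation then forces $D=0$. The second family of Proposition~\ref{kdim2} is commutative, so $\bullet$ itself turns out to be commutative, and Theorem~\ref{ass} collapses it into the canonical $\R^3$ alternative rather than producing a new case.

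The case $(1,1)$ is the most delicate. Both $\h_1$ and $\h_2$ are lines, so $B_1=B_2=0$, $\circ_1=0$, and since $\tr(\rho_1(X))=0$ on the $1$-dimensional target $\h_2$ we have $\rho_1\equiv 0$. Fixing unit vectors $e_1\in\h_1$ and $e_2\in\h_2$, I would write $e_2\circ_2 e_2=\alpha e_2$ and $\rho_2(e_2)(e_1)=\beta e_1$. System~\eqref{S1} then yields $\omega_1(e_1,e_1)=2\beta e_2$ and $\omega_2\equiv 0$; the trace identity in~\eqref{S2} gives $\alpha=-\beta$; and the sixth identity of~\eqref{S3}, evaluated at $X=e_2$, $Y=Z=e_1$, reduces to $2\alpha\beta=4\beta^2-1$. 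Combining these forces $6\beta^2=1$, hence $\beta=\mp 1/\sqrt 6$. Setting $a=-\beta=\pm 1/\sqrt 6$ and translating back through~\eqref{product} reproduces case~(3) verbatim. The Koszul form coefficients then follow from $\mathrm{B}=\rho\langle\,,\,\rangle$ with $\rho=\tfrac12\dim\h_1+\dim\h_2+1$, giving $2$, $3$, and $5/2$ in cases $(1)$, $(2)$, $(3)$ respectively.

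The main obstacle I anticipate is verifying that family~B of Proposition~\ref{kdim2} in the $(0,2)$ case truly reduces to $\R^3$ rather than giving a genuinely new algebra: this amounts to checking that the commutativity of $\circ_2$ propagates to $\bullet$, and then appealing to Theorem~\ref{ass} to discard the spurious family. Elsewhere the argument is essentially a parameter count inside the framework supplied by Theorem~\ref{theo}.
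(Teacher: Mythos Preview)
Your proposal is correct and follows essentially the same route as the paper's proof: split via Theorem~\ref{theo} into the three dimensional possibilities $(\dim\h_1,\dim\h_2)\in\{(2,0),(0,2),(1,1)\}$, handle the first two by Corollary~\ref{ex} together with Proposition~\ref{kdim2} and Theorem~\ref{ass}, and solve the $(1,1)$ case by reducing \eqref{S2}--\eqref{S3} to the single scalar equation $6a^2=1$. Your treatment is in fact slightly more careful than the paper's, since you explicitly note that the skew-symmetric derivation $D$ must vanish in the $(0,2)$ case (the paper passes over this point), and you correctly identify that commutativity of $\circ_2$ must be shown to propagate to $\bullet$ before invoking Theorem~\ref{ass}.
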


\begin{proof} According to Theorem \ref{theo}, $\G=\h_1\circ\h_2\oplus \R H$ with data $(\circ_1,\circ_2,B_1,B_2,\rho_1,\rho_2)$ satisfying the conditions in the theorem. There are three possibilities:
	\begin{enumerate}
		\item $\dim\h_1=2$ and $\h_2=0$. We apply Corollary \ref{ex} to get the first case.
		\item $\dim\h_2=2$ and $\h_1=0$. We apply Proposition \ref{kdim2} and Corollary \ref{ex} to get two algebras one of them is commutative and by Theorem \ref{ass}   is isomorphic to $\R^3$. The other gives the second case. 
		\item  $\dim\h_1=\dim\h_2=1$. 
		Put $\h_1=\R e_1$ and $\h_2=\R e_2$ with $|e_1|=|e_2|=1$. $B_1=B_2=0$, $\circ_1=0$ and $e_2\circ_2 e_2=ae_2$, $\rho_1=0$ and $\rho_2(e_2)=-a\mathrm{id}_{\h_1}$. We have $\om_2=0$ and $\om_1(e_1,e_1)=-2a e_2.$
		The relation
		\[ 	X\circ_2(\om_1(Y,Z))=\om_1(\rho_2(X)(Y),Z)+
		\om_1(Y,\rho_2(X)(Z)){-}\langle Y,Z\rangle_1 X \]
		gives $-2a^2=-2a(-2a)-1$ and hence
		 $1=6a^2$. \qedhere
	\end{enumerate}
	\end{proof}

To get examples in dimension 4 and 5, let us solve the systems \eqref{S1}-\eqref{S3} when $\dim\h_1\in\{2,3\}$,  $\dim\h_2=1$ and $B_1\not=0$. Put $\h_2=\R f$,  $A=\rho_2(f)$ and $f\circ_2 f=af$.  We have $B_2=0$, $\rho_1=0$,  $\om_2=0$,  $\om_1=\om f$ and $B_1$ skew-symmetric.   The triple $(\om,B_1,A)$ satisfies
	\begin{equation}\label{sys}\begin{cases}
			\tr(A)=-a,\; \;  \om(X,Y)=\langle A(X),Y\rangle_1+\langle A(Y),X\rangle_1,\quad [B_1,A]=0,\\
			a\om(X,Y)=\om(A(X),Y)+\om(A(Y),X)-\langle X,Y\rangle_1, \quad X,Y\in\h_1.
	\end{cases} \end{equation}
	
	\begin{itemize}
		
		\item $\dim\h_1=2$ and $B_1\not=0$. There exists an orthonormal basis $(e_1,e_2)$ of $\h_1$ such that $B_1$ and $A$ are given by their matrices
		\[ B_1=\left(\begin{matrix}
			0&\la\\-\la&0
		\end{matrix} \right)\esp A=\left(\begin{matrix}
			\al &\be\\-\be&\al
		\end{matrix} \right),\; \la>0,\; a=-2\al. \]
		We have
		\[ \om(e_1,e_1)=\om(e_2,e_2)=2\al\esp \om(e_1,e_2)=0. \]
		The last equation in \eqref{sys} is equivalent to $8\al^2-1=0$.
		
	We get a 4-dimensional $\mathrm{LSPK}$ where the non vanishing products are given by
	\[ \begin{cases}
		e_1\bullet e_1=e_2\bullet e_2=2\al f+H,\\
		f\bullet f=-2\al f+H,\; f\bullet e_1=\al e_1-\be e_2,f\bullet e_2=\be e_1+\al e_2,\\
		H\bullet e_1=-\la e_2+\frac12 e_1, H\bullet e_2=\la e_1+\frac12 e_2, H\bullet f=f\bullet H=f, H\bullet H=H,\; \al^2=\frac18,\be\in\R,\;\la>0.
		\end{cases} \]The matrix of the Koszul form is $3\mathrm{I}_4$.

		\item $\dim\h_1=3$ and $B_1\not=0$.  There exists an orthonormal basis $(e_1,e_2,e_3)$ of $\h_1$ such that $B_1$ and $A$ are given by their matrices
		\[ B_1=\left(\begin{matrix}
			0&\la&0\\-\la&0&0\\0&0&0
		\end{matrix} \right)\esp A=\left(\begin{matrix}
			\al &\be&0\\-\be&\al&0\\
			0&0&\ga
		\end{matrix} \right),\; \la>0,\; a=-(2\al+\ga). \]
		We have
		\[ \om(e_1,e_1)=\om(e_2,e_2)=2\al,\;\om(e_3,e_3)=2\ga\esp \om(e_1,e_2)=\om(e_1,e_3)=\om(e_2,e_3)=0. \]
		\end{itemize}	
	The last equation in \eqref{sys} is equivalent to
\[ 8\al^2+2\al\ga-1=0\esp 6\ga^2+4\al\ga-1=0. \]
\[ (\ga,\al)=\left(\pm \frac{1}{\sqrt{3}},\pm \frac{-3}{4\sqrt{3}}\right)\quad\mbox{or}\quad 
(\ga,\al)=\left(\pm \frac{1}{\sqrt{10}},\pm \frac{1}{\sqrt{10}}\right).\]	
We get  a 5-dimensional $\mathrm{LSPK}$ where the non vanishing products are given by
\[ \begin{cases}
	e_1\bullet e_1=e_2\bullet e_2=2\al f+H,e_3\bullet e_3=2\ga f+H\\
	f\bullet f=-(2\al+\ga) f+H,\; f\bullet e_1=\al e_1-\be e_2,f\bullet e_2=\be e_1+\al e_2,f\bullet e_3=\ga e_3,\\
	H\bullet e_1=-\la e_2+\frac12 e_1, H\bullet e_2=\la e_1+\frac12 e_2, H\bullet e_3=\frac12 e_3, H\bullet f=f\bullet H=f, H\bullet H=H,\;\\ \be\in\R,\;(\al,\ga)\in\left\{\left(\pm \frac{1}{\sqrt{3}},\pm \frac{-3}{4\sqrt{3}}\right),\left(\pm \frac{1}{\sqrt{10}},\pm \frac{1}{\sqrt{10}}\right)\right\},\;\la>0.
\end{cases} \]	The matrix of the Koszul form is $\frac72\mathrm{I}_5$.

\section{Appendix}\label{section7}

\subsection{Proof of Proposition \ref{mainpr}}
Let $(\h,\circ,\prs)$ be an algebra endowed with a scalar product and $A,S$ two endomorphisms of $\h$ such that $S$ is symmetric and the system \eqref{AS} holds. We have shown that $\h=\h_1\oplus\h_2$,  there exists a product $\circ_i$ and a metric $\prs_i$ on $\h_i$ for $i=1,2$ satisfying \eqref{omega}, $\rho_1:\h_1\too \mathrm{End}(\h_2)$,  $\rho_2:\h_2\too \mathrm{End}(\h_1)$ and $\om_1:\h_1\times\h_1\too\h_2$, $\om_2:\h_2\times\h_2\too\h_1$  given by \eqref{omega}  such that the product $\circ$ is given by \ref{match},
$S_{|\h_1}=0$, $S_{|\h_2}=\mathrm{Id}_{\h_2}$, $A_{|\h_1}=B_1+\frac12\mathrm{Id}_{\h_1}$ and 
$A_{|\h_2}=B_2+\mathrm{Id}_{\h_2}$ with $B_1$ and $B_2$ are skew-symmetric. Moreover, \eqref{AS} reduces to
\begin{equation}\label{dAS}
	\begin{cases}
		\mathrm{ass}_\circ(X,Y,Z)
		-\mathrm{ass}_\circ(Y,X,Z)=
		\left( \langle Y,Z\rangle SX-\langle X,Z\rangle SY\right),\\
		A(X\circ Y)=AX\circ Y+X\circ AY-SX\circ Y.
	\end{cases}
\end{equation}

Let us expand the first relation in \eqref{dAS}. We distinguish many cases.
\begin{itemize}
	\item 
	For any $X,Y,Z\in\h_1$,
	\begin{align*}
		\ass_\circ(X,Y,Z)&=(X\circ Y)\circ Z-X\circ(Y\circ Z)\\
		&=(X\circ_1 Y)\circ_1 Z+\om_1(X\circ_1 Y,Z)+\rho_2(\om_1(X,Y))(Z)-X\circ_1(Y\circ_1 Z)-\om_1(X,Y\circ_1 Z)-\rho_1(X)(\om_1(Y,Z))\\
		&=\ass_{\circ_1}(X,Y,Z)+\rho_2(\om_1(X,Y))(Z)
		-\om_1(X,Y\circ_1 Z)-\rho_1(X)(\om_1(Y,Z))
	\end{align*}So
	\[ \begin{cases}
		\ass_{\circ_1}(X,Y,Z)=\ass_{\circ_1}(Y,X,Z),\\
		\rho_1(X)(\om_1(Y,Z))-\rho_1(Y)(\om_1(X,Z)) +\om_1(X,Y\circ_1 Z)-\om_1(Y,X\circ_1 Z)
		-\om_1([X,Y],Z)=0,\\ X,Y,Z\in\h_1,
	\end{cases} \]In the same way, we get
	\[ \begin{cases}
		\ass_{\circ_2}(X,Y,Z)-\ass_{\circ_2}(Y,X,Z)=
		\left(\langle Y,Z\rangle X-\langle X,Z\rangle Y \right),\\
		\rho_2(X)(\om_2(Y,Z))-\rho_2(Y)(\om_2(X,Z)) +\om_2(X,Y\circ_2 Z)-\om_2(Y,X\circ_2 Z)-\om_2([X,Y],Z)=0,\\ X,Y,Z\in\h_2.
	\end{cases} \]
	
	\item For $X,Y\in\h_1$ and $Z\in\h_2$, we have
	\begin{align*}
		\ass_{\circ}(X,Y,Z)&=(X\circ Y)\circ Z-X\circ(Y\circ Z)\\
		&=\rho_1(X\circ_1 Y)(Z)+\om_1(X,Y)\circ_2 Z+\om_2(\om_1(X,Y),Z)-\rho_1(X)\circ\rho_1(Y)(Z).
	\end{align*}So $\rho_1$ is a representation of Lie algebras. In the same way, we get that $\rho_2$ is a also a representation of Lie algebras.
	
	\item For $X,Z\in\h_1$ and $Y\in\h_2$, we have
	\begin{align*}
		\ass_{\circ}(X,Y,Z)&=(X\circ Y)\circ Z-X\circ(Y\circ Z)\\
		&=\rho_2(\rho_1(X)(Y))(Z)-X\circ_1\rho_2(Y)(Z)-\om_1(X,\rho_2(Y)(Z)),\\
		\ass_{\circ}(Y,X,Z)&=(Y\circ X)\circ Z-Y\circ(X\circ Z)\\
		&=\rho_2(Y)(X)\circ_1 Z+\om_1(\rho_2(Y)(X),Z)
		-\rho_2(Y)(X\circ_1 Z)-Y\circ_2(\om_1(X,Z))-\om_2(Y,\om_1(X,Z))	
	\end{align*}So
	\[\begin{cases} Y\circ_2(\om_1(X,Z))=\om_1(\rho_2(Y)(X),Z)+\om_1(X,\rho_2(Y)(Z))-\langle X,Z\rangle Y,\\ 
		\rho_2(Y)(X\circ_1 Z)=X\circ_1\rho_2(Y)(Z)+
		\rho_2(Y)(X)\circ_1 Z
		-\om_2(Y,\om_1(X,Z))-\rho_2(\rho_1(X)(Y))(Z).\end{cases} \]

	\item For $X,Z\in\h_2$ and $Y\in\h_1$, we have
	\begin{align*}
		\ass_{\circ}(X,Y,Z)&=(X\circ Y)\circ Z-X\circ(Y\circ Z)\\
		&=\rho_1(\rho_2(X)(Y))(Z)-X\circ_2\rho_1(Y)(Z)
		-\om_2(X,\rho_1(Y)(Z))
		,\\
		\ass_{\circ}(Y,X,Z)&=(Y\circ X)\circ Z-Y\circ(X\circ Z)\\
		&=\rho_1(Y)(X)\circ_2 Z+\om_2(\rho_1(Y)(X),Z)-\rho_1(Y)(X\circ_2 Z)-Y\circ_1 \om_2(X,Z)-\om_1(Y,\om_2(X,Z)).	
	\end{align*}So
	\[ \begin{cases}
		Y\circ_1 \om_2(X,Z)=\om_2(\rho_1(Y)(X),Z)+\om_2(X,\rho_1(Y)(Z)),\\	
		\rho_1(Y)(X\circ_2 Z)=X\circ_2\rho_1(Y)(Z)
		+
		\rho_1(Y)(X)\circ_2  Z-\om_1(Y,\om_2(X,Z))-\rho_1(\rho_2(X)(Y))(Z).
	\end{cases} \]
\end{itemize}
Let us expand the second relation in \eqref{dAS}. We distinguish many cases.
\begin{itemize}
	\item 
	For $X,Y\in\h_1$, we get
	\begin{align*}
		A(X\circ Y)&=B_1(X\circ_1 Y)+\frac12X\circ_1Y+B_2(\om_1(X,Y))+\om_1(X,Y),\\
		AX\circ Y&=B_1(X)\circ_1Y+\om_1(B_1(X),Y)+\frac12X\circ_1Y+\frac12\om_1(X,Y),\\
		X\circ AY&=X\circ_1B_1(Y)+\om_1(X,B_1(Y))+\frac12 X\circ_1Y+\frac12\om_1(X,Y).	
	\end{align*}So
	\[ \begin{cases}
		B_1(X\circ_1 Y)=B_1(X)\circ_1Y+X\circ_1B_1(Y)+
		+\frac12 X\circ_1Y,\\
		B_2(\om_1(X,Y))=\om_1(B_1(X),Y)+\om_1(X,B_1(Y)).
	\end{cases} \]
	
	\item For $X,Y\in\h_2$, we get
	\begin{align*}
		A(X\circ Y)&=B_2(X\circ_2 Y)+X\circ_2Y+B_1(\om_2(X,Y))+\frac12\om_2(X,Y),\\
		AX\circ Y&=B_2(X)\circ_2Y+\om_2(B_2(X),Y)
		+X\circ_2Y+\om_2(X,Y),\\
		X\circ AY&=X\circ_2B_2(Y)+\om_2(X,B_2(Y))+ X\circ_2Y+\om_2(X,Y),\\
		-X\circ Y&=-X\circ_2 Y-\om_2(X,Y).	
	\end{align*}So
	\[ \begin{cases}
		B_2(X\circ_2 Y)=B_2(X)\circ_2Y+X\circ_2B_2(Y),\\
		B_1(\om_2(X,Y))=
		\om_2(B_2(X),Y)+\om_2(X,B_2(Y))+\frac12\om_2(X,Y).
	\end{cases} \]
	\item For $X\in\h_1$ and $Y\in\h_2$, we get
	\begin{align*}
		A(X\circ Y)&=B_2(\rho_1(X)(Y))+\rho_1(X)(Y),\\
		AX\circ Y&=\rho_1(B_1(X))(Y)+\frac12\rho_1(X)(Y),\\
		X\circ AY&=\rho_1(X)(B_2(Y))+\rho_1(X)(Y).
	\end{align*}So
	$ [B_2,\rho_1(X)]=\rho_1(B_1(X))+\frac12\rho_1(X).$
	\item For $X\in\h_2$ and $Y\in\h_1$, we get
	\begin{align*}
		A(X\circ Y)&=B_1(\rho_2(X)(Y))+\frac12\rho_2(X)(Y),\\
		AX\circ Y&=\rho_2(B_2(X))(Y)+\rho_2(X)(Y),\\
		X\circ AY&=\rho_2(X)(B_1(Y))+\frac12\rho_2(X)(Y),\\
		-X\circ Y&=-\rho_2(X)(Y).
	\end{align*}So
	\[ [B_1,\rho_2(X)]=\rho_2(B_2(X)). \esp
	B_2(\om_1(X,Y))=\om_1(B_1(X),Y)+\om_1(X,B_1(Y)). \]
\end{itemize}
The algebra $(\h_1,\circ_1,\prs)$ satisfies the hypothesis of Lemma \ref{useful1} and hence $\circ_1$ is trivial. 

To sum up the algebras $(\h_i,\circ_i,\prs_i)$ and the associated data $(\rho_1,\rho_2,\om_1,\om_1,B_1,B_2)$ satisfy the following properties: $\circ_1=0$, $B_1$ is skew-symmetric, $\tr(\rho_1(X))=0$ for any $X\in\h_1$ and
\[ \begin{cases}
	\langle\om_1(X,Y),Z\rangle_2=\langle \rho_2(Z)(X),Y\rangle_1+\langle \rho_2(Z)(Y),X\rangle_1,\quad X,Y\in\h_1,Z\in\h_2, \\
	\langle\om_2(X,Y),Z\rangle_1=\langle \rho_1(Z)(X),Y\rangle_2+\langle \rho_1(Z)(Y),X\rangle_2,\quad X,Y\in\h_2,Z\in\h_1.
\end{cases} \]

\begin{align*} \begin{cases}
		\langle X\circ_2 Y-Y\circ_2 X,Z\rangle_2=
		\langle Y\circ_2 Z,X\rangle_2-\langle  X\circ_2 Z,Y\rangle_2,\\
		\ass_{\circ_2}(X,Y,Z)-\ass_{\circ_2}(Y,X,Z)=
		\left(\langle Y,Z\rangle_2 X-\langle X,Z\rangle_2 Y \right),\\
		B_2(X\circ_2 Y)=B_2(X)\circ_2Y+X\circ_2B_2(Y),\\
		\langle B_2X,Y\rangle_2=-\langle B_2Y,X\rangle_2,\tr(\Li_X^{\circ_2})=-\tr(\rho_2(X)).
\end{cases} \end{align*}
\[ \begin{cases}
	B_2(\om_1(X,Y))=\om_1(B_1(X),Y)+\om_1(X,B_1(Y)),\\
	B_1(\om_2(X,Y))=
	\om_2(B_2(X),Y)+\om_2(X,B_2(Y))+\frac12\om_2(X,Y).	
\end{cases} \]

\[ \begin{cases}
	\rho_1(X)(Y\circ_2 Z)=Y\circ_2\rho_1(X)(Z)
	+\rho_1(X)(Y)\circ_2  Z-\rho_1(\rho_2(Y)(X))(Z)-\om_1(X,\om_2(Y,Z)),\\
	\rho_2(\rho_1(Y)(X))(Z)+\om_2(X,\om_1(Y,Z))=0,\\
	\rho_1(X)(\om_1(Y,Z))-\rho_1(Y)(\om_1(X,Z)) =0,\\
	\rho_2(X)(\om_2(Y,Z))-\rho_2(Y)(\om_2(X,Z)) +\om_2(X,Y\circ_2 Z)-\om_2(Y,X\circ_2 Z)-{\om_2([X,Y], Z)}=0,\\	
	\om_2(\rho_1(X)(Y),Z)+\om_2(Y,\rho_1(X)(Z))=0,\\
	X\circ_2(\om_1(Y,Z))=\om_1(\rho_2(X)(Y),Z)+
	\om_1(Y,\rho_2(X)(Z)){-}\langle Y,Z\rangle_1 X,\\
	[B_2,\rho_1(X)]=\rho_1(B_1(X))+\frac12\rho_1(X),\\
	[B_1,\rho_2(X)]=\rho_2(B_2(X)).	\end{cases} \]
Let us show that relations $[B_2,\rho_1(X)]=\rho_1(B_1(X))+\frac12\rho_1(X)$,
$[B_1,\rho_2(X)]=\rho_2(B_2(X))$ and the definition of $\om_i$ implies the system
\[ \begin{cases}
	B_2(\om_1(X,Y))=\om_1(B_1(X),Y)+\om_1(X,B_1(Y)),\\
	B_1(\om_2(X,Y))=
	\om_2(B_2(X),Y)+\om_2(X,B_2(Y))+\frac12\om_2(X,Y)	
\end{cases} \]
and hence this system is redundant. Indeed,
\begin{align*}
	\langle B_2(\om_1(X,Y)),Z\rangle_2&=-\langle \rho_2(B_2(Z))(X),Y\rangle_1-\rho_2(B_2(Z))(Y),X\rangle_1	\\
	&=-\langle B_1(\rho_2(Z)(X)),Y\rangle_1+\langle \rho_2(Z)(B_1(X)),Y\rangle_1
	-\langle B_1(\rho_2(Z)(Y)),X\rangle_1+\langle \rho_2(Z)(B_1(Y)),X\rangle_1,\\
	\langle \om_1(B_1(X),Y),Z\rangle_2&=\langle \rho_2(Z)(B_1(X)),Y\rangle_1+\langle \rho_2(Z)(Y),B_1(X)\rangle_1,\\
	\langle \om_1(X,B_1(Y)),Z\rangle_2&=\langle \rho_2(Z)(B_1(Y)),X\rangle_1+\langle \rho_2(Z)(X),B_1(Y)\rangle_1
\end{align*}and the first relation follows. The second relation follows in a similar way.

\end{document}